\newcommand{\beq}{\begin{equation}}
\newcommand{\eeq}{\end{equation}}
\newcommand{\beqn}{\begin{equation*}}
\newcommand{\eeqn}{\end{equation*}}
\newcommand{\bed}{\begin{displaymath}}
\newcommand{\eed}{\end{displaymath}}
\newcommand{\beali}{\beq\left\{\begin{aligned}}
\newcommand{\enali}{\end{aligned}\right.\eeq}
\newcommand{\bea}{\bed\begin{array}{rl}}
\newcommand{\eea}{\end{array}\eed}
\newcommand{\olsi}[1]{\,\overline{\!{#1}}} 
\newtheorem{theorem}{Theorem}[section]
\newtheorem{lemma}[theorem]{Lemma}
\newtheorem{remark}[theorem]{Remark}
\newtheorem{definition}{Definition}[section]
\newtheorem {hypo}{Hypothesis}[section]
\begin{document}

\title{Unfolding a Hopf bifurcation in a linear reaction-diffusion equation with strongly localized impurity – existence of breathing pulses}
\author{Ji Li,\thanks{School of Mathematics and Statistics, Huazhong University of Science and Technology, Wuhan, Hubei, 430074, P.R. China,  liji@hust.edu.cn
} \and  Qing Yu,\thanks{ School of Mathematics and Statistics, Huazhong University of Science and Technology, Wuhan, Hubei, 430074, P.R. China,  yuqing@hust.edu.cn } \and Qian Zhang\thanks{Corresponding author. School of Mathematics and Statistics, Huazhong University of Science and Technology, Wuhan, Hubei, 430074, P.R. China, qian\_z@hust.edu.cn } }
\maketitle
\begin{abstract}
This paper presents a general framework to derive the weakly nonlinear stability near a Hopf bifurcation in a special class of multi-scale reaction-diffusion equations. The main focus is on how the linearity and nonlinearity of the fast variables in system influence the emergence of the breathing pulses when the slow variables are linear and the bifurcation parameter is around the Hopf bifurcation point. By applying the matching principle to the fast and slow changing quantities and using the relevant theory of singular perturbation, we obtain explicit expressions for the stationary pulses. Then, the normal form theory and the center manifold theory are applied to give  Hopf normal form expressions. Finally, one of these expressions is verified by the numerical simulation.
\end{abstract}

{\bf Key Words:}{ Pinned solutions; Hopf bifurcation; Breathing pulse; Center manifolds expansion; Normal forms.}
\\ \hspace*{\fill} \\
\textbf{Here, we consider systems of linear reaction-diffusion equations with the impact of strong, spatially localized, nonlinear impurities. From an applied perspective, it is feasible to see linear systems becoming locally nonlinear with the introduction of strongly localized impurities. Besides, these impurities exhibit the structure of Dirac delta-type function, whose role can be viewed as adding fast components to the unperturbed systems. In other words, we are studying the general singularly perturbed slow-fast reaction-diffusion equations with linear slow flow. The aim of this paper is to study localized patterns, such as breathing pulses, in the aforementioned systems with linear/nonlinear impurities. It further confirms the fact that the most general pulse destabilization scenario corresponds to a Hopf bifurcation. By applying Normal Form Theory and Center Manifold Theory, we develop a mechanism for weakly nonlinear analysis. It is worth noting that the calculations provided in this paper are much simpler than the previous analysis. Furthermore, our analysis sheds some light on the detailed unfolding of a Bogdanov–Takens bifurcation or a Dumortier–Roussarie–Sotomayor bifurcation of a localized pulse solution. }

\section{Introduction}
In this paper, we consider the unfolding of a Hopf bifurcation in a linear reaction-diffusion equation with strongly localized impurity, which was first introduced by Arjen, Heijster and Shen in \cite{DvHS} as:
\beali\label{orginal}
  \frac{\partial U_1}{\partial t}&=\frac{\partial^2 U_1}{\partial x^2}-\mu U_1+\frac{\alpha}{\varepsilon^2}I(\frac{x}{\varepsilon^2})G_1(U_1,U_2),\\
  \frac{\partial U_2}{\partial t}&=D\frac{\partial^2 U_2}{\partial x^2}-b U_1-\mu U_2+\frac{\beta}{\varepsilon^2}I(\frac{x}{\varepsilon^2})G_2(U_1,U_2),
\enali
where $(x,t)\in\mathbb{R}\times\mathbb{R^+}$, $U_{1,2}(x,t):\mathbb{R}\times\mathbb{R^+}\rightarrow\mathbb{R^\mathbb}$, $0<\varepsilon \ll1$ is a sufficiently small parameter measuring the degree of locality, $\alpha,\beta\in\mathbb{R}$ are parameters measuring the strength of the impurities, $G_1, G_2$ are sufficiently smooth functions satisfying $G_{1,2}(\mathbf{0})\neq\mathbf{0}$, and $I$ is a Dirac delta-type impurity. Here, $\mu$ is assumed to be positive to ensure the ground state 0 is stable, which has been explained in \cite{DvHS} and \cite{LSZ}. Without missing any essential idea, we assume exclusively $I(\xi)=\frac{1}{\sqrt{\pi}}e^{-\xi^2}$ all over this paper. The reasons why we study this system can be summarized into three parts. First, adding strongly localized impurities can turn the linear system into a locally nonlinear one and in reality the linearity of the models may break down under strong perturbations. Second, the system we present here avoids the difficulty in analyzing the spectrum of differential operators with nonconstant coefficients. Instead, the spectral stability problem is reduced to linear algebra by matching the changing quantities in fast and slow scales with the methods of GSPT. Third, the spatial heterogeneities eliminate the translational eigenvalue $\lambda=0$, which makes the center manifold analysis  much easier.

As one of the most general pulse destabilisation scenarios \cite{veerman2015breathing}, the mechanism of the unfolding of a Hopf bifurcation can be developed by the local analysis of the associated center manifold. With regard to the dynamic behavior near this bifurcation point, numerical simulations can reveal stable, temporally oscillating pulses. Such breathing localized pulses, also known as oscillons, are fascinating topics of research in the fields of lasers \cite{vladimirov1999numerical,Gopalakrishnan2022}, optical media \cite{gomila2006,gurevich2006breathing,turaev2012,Eslamif2023} and excitatory neural networks \cite{folias2004breathing,folias2005,bressloff2004front,bressloff2011two,bressloff2003oscillatory} over the past few decades. These oscillons are localized waves that periodically vary in shape and amplitude. They often occur in the singularly perturbed reaction-diffusion systems. They are also excitatory localized structures that can respond to external stimuli and change accordingly. Mathematical models of breathing localized pulses are often complicated. The singular perturbation theory and the center manifold reduction are required in helping analyzing their stability and bifurcations. Their existence and formation mechanisms are not fully understood and are interesting research topics in nonlinear dynamics.

Breathing pulses were initially found in one-dimensional reaction-diffusion systems by Koga and Kuramoto \cite{koga1980localized}, in which a stationary localized pattern destabilized around a Hopf bifurcation leading to a “breathing motion”. Besides, it was found that multi-spike quasi-equilibrium solutions could also undergo a Hopf bifurcation, resulting in oscillations in the spike amplitudes on an $O(1)$ time scale in \cite{chen2009,ward2003,ward2003hopf}. With respect to those oscillating fronts in \cite{folias2004breathing,bressloff2003oscillatory}, they are denoted by the terminology “breather”. In the field of optics, “breathing pulses” can be used to describe the propagation behavior of light pulses in optical fibers. In biology, “oscillons” can be employed to model the activity patterns of cells or neural networks.

Recent studies  \cite{DvHS,LSZ,CLSZ} have explored the pattern formation processes in models with linear or nonlinear structures outside the impurities based on \cite{K13,jones1995geometric}. It is worth noting that these systems can be viewed as the simplifications of some canonical singularly perturbed reaction-diffusion systems like: Gray-Scott \cite{gray1983autocatalytic,CW11,DKZ}, Gierer-Meinhardt \cite{gierer1972theory,DKP07} and Klausmeier \cite{CD18,BCD19} models, due to the existence of fast and slow scales. For these classical systems, the singularly perturbed structure induces the spatial scale separation, which gives explicit leading order expressions for the patterns under consideration. Moreover, with the help of the Hopf eigenfunctions, explicit leading order expressions for Hopf normal form coefficients can be obtained. As we have seen, \cite{veerman2015breathing} studies the emergence of the breathing pulses in a slowly nonlinear Gierer-Meinhardt system \cite{VD13}, which initially exhibits typical “subcritical" growth behavior and then a bounded temporally oscillating pulse. Different from the pattern formation in Gierer-Meinhardt system \cite{gierer1972theory,DKP07}, this additional nonlinearity has a profound impact on both the stability analysis and dynamic behavior near the Hopf bifurcation.

The numerical simulation in \cite{DvHS} also reveals such oscillating pulses. The addition of small nonlinearity $-\varepsilon^3U_1^3$ to linear $G_1$ prevents the profile from blowing up and forms a breathing pinned 1-pulse solution. This sheds some light on performing a center manifold analysis near spectral configurations of co-dimension one and higher. Hence, the unfolding of a Bogdanov-Takens bifurcation \cite{kuznetsov1998elements} or controllable chaotic pulse dynamics \cite{Eslamif2023,shil1993normal} can be a natural next step. Besides, the oscillating pulses around the Hopf bifurcation still need an analytical explanation. Hence, in this paper, we focus on the conditions under which such breathing pulses arise for general linear reaction-diffusion equations with the impact of spatial defects. To accomplish this, we need to calculate the leading order expressions for the target pulses, then apply the Hopf normal form theory to derive the normal form coefficients and finally determine the pitchfork bifurcation type. We emphasize that the main differences with \cite{veerman2015breathing} is that we avoid the analysis of the direction spanned by the translational mode in the Hopf center manifold and the computation of the inverse problems associated with the scale separated structure.

The article is structured as follows. Section \ref{2} introduces the relevant notations,  hypotheses and lemmas, mainly based on \cite{CN11}. Section \ref{3} is divided into five parts. In subsection \ref{31}, we briefly review the basic mechanism of
the existence and stability for pinned pulse solution in system (\ref{orginal}) and give explicit expressions of pulse solutions and corresponding eigenfunctions. In subsection \ref{32}, we study the local expansion of the center manifold when $G_1$ and $G_2$ are both linear. The linearities of $G_1$ and $G_2$ do not generate nonlinear remainder terms involving $U_1$ or $U_2$, which prevents breathing pulses. In subsections \ref{33} and \ref{34}, due to the difficulty in analyzing the Hopf bifurcation conditions, we examine several simple situations and discuss the cases where $G_1$ and $G_2$ are separately nonlinear. Then in subsection \ref{35}, we calculate a specific example which ever appeared in \cite{DvHS} and explain the pattern formations that emerged. Finally, in section \ref{4}, we conclude with some remarks and suggestions for future research.

\section{Notations,  Hypotheses and Lemmas}\label{2}
We briefly introduce some required notations, hypotheses and lemmas in connection with the normal form theory and the center manifold theory. As a general reference, we would like to mention \cite{CN11}.

Let $\mathcal{X}$, $\mathcal{Y}$, $\mathcal{Z}$ be (real or complex) Banach spaces such that
$\mathcal{Z}\hookrightarrow\mathcal{Y}\hookrightarrow\mathcal{X}$,
with continuous embeddings. For a parameter-dependent differential equation in $\mathcal{X}$ of the form
\beq\label{general-l}
\frac{du}{dt}=Lu+R(u,\mu),
\eeq
in which $L$ is a linear operator and $R$ is defined for $(u,\mu)$ in a neighborhood of (0,0) in $\mathcal{Z}\times\mathbb{R}^m$. Here $\mu$ is a parameter to be small.
\begin{definition}
  A linear operator $L: \mathcal{Z}\rightarrow\mathcal{X}$ is called a bounded linear operator, if $L$ is continuous. The set of bounded linear operator is denoted by $L \in \mathcal{L}(\mathcal{Z},\mathcal{X})$.
\end{definition}
\begin{hypo}\label{hypo1}
We assume that $L$ and $R$ in (\ref{general-l}) have the following properties:
\begin{enumerate}
\item[(i)] $L \in \mathcal{L}(\mathcal{Z},\mathcal{X})$;
\item[(ii)] for some $k\geq2$, there exist neighborhoods $\mathcal{V}_u\subset\mathcal{Z}$ and $\mathcal{V}_\mu\subset\mathbb{R}^m$ of 0 such that $R\in \mathcal{C}^k(\mathcal{V}_u\times\mathcal{V}_\mu,\mathcal{Y})$ and $R(0,0)=0,\ D_uR(0,0)=0$.
\end{enumerate}
\end{hypo}
\begin{hypo}\label{hypo2}
 \textbf{(Spectral decomposition)} Consider the spectrum $\sigma$ of the linear operator $L$, and write $\sigma=\sigma_+\cup\sigma_0\cup\sigma_-$,
in which $ \sigma_+=\{\lambda\in\sigma;\rm{Re}\lambda>0\},\  \sigma_0=\{\lambda\in\sigma;\rm{Re}\lambda=0\}, \ \sigma_-=\{\lambda\in\sigma;\rm{Re}\lambda<0\}$.
We assume that
\begin{enumerate}
\item[(i)] there exists a positive constant $\gamma>0$ such that
$\inf\limits_{\lambda\in\sigma_+} \rm{Re}$$\lambda >\gamma,\ \sup\limits_{\lambda\in\sigma_-} \rm{Re}$$\lambda <-\gamma$;
\item[(ii)]\label{spectral-decomposition} the set $\sigma_0$ consists of a finite number of eigenvalues with finite algebraic multiplicities.
\end{enumerate}
\end{hypo}
As a consequence of Hypothesis \ref{spectral-decomposition}(ii), we can define the spectral projection $P_0\in\mathcal{L}(\mathcal{X})$, corresponding to $\sigma_0$, by the Dunford integral formula
$P_0=\frac{1}{2\pi i}\int_{\Gamma}(\lambda \mathbb{I}-L)^{-1}d\lambda$,
where $\Gamma$ is a simple, oriented counterclockwise, Jordan curve surrounding $\sigma_0$ and lying entirely in $\{\lambda\in \mathbb{C}:|\rm{Re}\lambda|<\gamma\}$. Then we define projection $P_h:\mathcal{X}\rightarrow\mathcal{X}$ by
$P_h=\mathbb{I}-P_0$.
The spectral subspaces associated with these two projections are denoted by
$\mathcal{E}_0=\rm{Ran} $$P_0$$=\rm{Ker}$$ P_h$$\subset\mathcal{X},\ \mathcal{X}_h=\rm{Ran}$$P_h$$=\rm{Ker}$$P_0$$\subset\mathcal{X}$.
Also, we set
$\mathcal{Z}_h=P_h\mathcal{Z}\subset\mathcal{Z},\ \mathcal{Y}_h=P_h\mathcal{Y}\subset\mathcal{Y}$,
and denote by $L_0$ and $L_h$ the restrictions of $L$ to $\mathcal{E}_0$ and $\mathcal{Z}_h$. As an immediate consequence, the spectrum of $L_0$ is $\sigma_0$ and the spectrum of $L_h$ is $\sigma_h=\sigma_+\cup\sigma_-$.
\begin{hypo}\label{hypo3-1}
  \textbf{(Resolvent estimates)} Assume that $\tilde{\mathcal{X}}=\mathcal{X}\times\mathbb{R}^m$, $\tilde{\mathcal{Y}}=\mathcal{Y}\times\mathbb{R}^m$, $\tilde{\mathcal{Z}}=\mathcal{Z}\times\mathbb{R}^m$ are Hilbert spaces, and there exist positive constants $\omega_0>0$, $c>0$ and $\alpha\in[0,1)$ such that for all $\omega\in \mathbb{R}$, with $|\omega|\geq \omega_0$, we have that $i\omega$ belongs to the resolvent set of $L$ and $ ||(i\omega\mathbb{I}-\tilde{L}_h)^{-1}||_{\mathcal{L}(\mathcal{X})}\leq\frac{c}{|\omega|}$, where $\tilde{L}_h:=L_h+D_\mu P_hR(0,0)$.
\end{hypo}
\begin{lemma}\label{lemma1}
\textbf{(Parameter-dependent center manifolds)} (P.46 of \cite{CN11}) Assume that Hypotheses \ref{hypo1}, \ref{hypo2} and \ref{hypo3-1} hold. Then there exists a map $\Psi\in\mathcal{C}^k(\mathcal{E}_0\times\mathbb{R}^m,\mathcal{Z}_h)$, with
$\Psi(0,0)=0,\ D_u\Psi(0,0)=0$,
 and a neighborhood $\mathcal{O}_u\times\mathcal{O}_\mu$ of (0,0) in $\mathcal{Z}\times\mathbb{R}^m$ such that for $\mu\in\mathcal{O}_\mu$, the manifold
$\mathcal{M}_0(\mu)=\{u_0+\Psi(u_0,\mu):u_0\in\mathcal{E}_0\}$
 has the following properties:
\begin{enumerate}
\item[(i)] $\mathcal{M}_0(\mu)$ is locally invariant.
\item[(ii)] $\mathcal{M}_0(\mu)$ contains the set of bounded solutions of (\ref{general-l}) staying in $\mathcal{O}_u$ for all $t\in\mathbb{R}$.
\end{enumerate}
\end{lemma}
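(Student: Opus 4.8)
The plan is to follow the standard fixed-point construction of center manifolds adapted to the loss-of-regularity scale $\mathcal{Z}\hookrightarrow\mathcal{Y}\hookrightarrow\mathcal{X}$, with the parameter $\mu$ promoted to a dynamical variable. First I would append the trivial equation $d\mu/dt=0$ to (\ref{general-l}), obtaining an autonomous system on $\tilde{\mathcal{Z}}$ whose linearization $\tilde{L}$ has central spectrum $\sigma_0\cup\{0\}$ (the extra zero coming from the $\mu$-directions) and hyperbolic part governed by $\tilde{L}_h$. Since the assertion is purely local, I would next replace $R$ by a globally defined truncation $R_\rho$ that coincides with $R$ on a ball of radius $\rho$, is compactly supported, and — using $R\in\mathcal{C}^k$ with $R(0,0)=0$, $D_uR(0,0)=0$ from Hypothesis \ref{hypo1} — has Lipschitz constant on $\tilde{\mathcal{Z}}$ tending to $0$ as $\rho\to0$.

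The analytic heart is to invert the hyperbolic linear part. I would show that $d/dt-\tilde{L}_h$ is boundedly invertible as a map $C_\eta(\mathbb{R},\mathcal{Y}_h)\to C_\eta(\mathbb{R},\mathcal{Z}_h)$ for some fixed $0<\eta<\gamma$, where $C_\eta$ denotes the exponentially weighted space with norm $\sup_t e^{-\eta|t|}\|u(t)\|$. The bounded inverse $\mathcal{K}_h$ is built from a Fourier–Laplace representation along the imaginary axis, and the resolvent estimate $\|(i\omega\mathbb{I}-\tilde{L}_h)^{-1}\|_{\mathcal{L}(\mathcal{X})}\le c/|\omega|$ of Hypothesis \ref{hypo3-1}, combined with the smoothing exponent $\alpha\in[0,1)$, provides the uniform operator bound together with the gain of regularity from $\mathcal{Y}_h$ to $\mathcal{Z}_h$.

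With $\mathcal{K}_h$ in hand, a bounded orbit of the truncated system must satisfy the integral equation $u(t)=e^{tL_0}u_0+\big(\mathcal{K}_hP_hR_\rho(u)\big)(t)$, the $\mu$-component being frozen. For $\rho$ sufficiently small the right-hand side is a uniform contraction on $C_\eta(\mathbb{R},\tilde{\mathcal{Z}})$, so Banach's theorem yields a unique bounded solution $u=\Phi(u_0,\mu)$ depending on $u_0\in\mathcal{E}_0$ and $\mu\in\mathbb{R}^m$; I would then set $\Psi(u_0,\mu)=P_h\Phi(u_0,\mu)(0)$, giving $\Psi(0,0)=0$ and $D_u\Psi(0,0)=0$ from the corresponding properties of $R_\rho$. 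Local invariance (i) follows from the translation invariance of the integral equation, and property (ii) holds because the truncation is inactive on $\mathcal{O}_u$, so any solution that genuinely stays bounded in $\mathcal{O}_u$ satisfies the same equation and hence lies on $\mathcal{M}_0(\mu)$.

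The main obstacle will be the $\mathcal{C}^k$ smoothness of $\Psi$. Because $R_\rho$ maps $\mathcal{Z}$ into $\mathcal{Y}$ rather than back into $\mathcal{Z}$, one cannot simply differentiate the fixed-point equation; instead I would invoke the fiber-contraction theorem, differentiating the contraction in successively weaker weighted norms $C_{j\eta}$ and propagating smoothness one order at a time, verifying at each order that the regularity gained through $\mathcal{K}_h$ offsets the loss incurred across the embeddings. Establishing the uniform bound for $\mathcal{K}_h$ from the single resolvent estimate of Hypothesis \ref{hypo3-1} — essentially a Mikhlin-type multiplier argument on the imaginary axis — is the other genuinely delicate point.
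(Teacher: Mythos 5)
This lemma is stated in the paper without proof: it is quoted directly from P.~46 of \cite{CN11} (Haragus--Iooss), so there is no in-paper argument to compare against. Your proposal is a faithful reconstruction of the standard Vanderbauwhede--Iooss proof given in that cited reference --- appending $d\mu/dt=0$ so the parameter joins the center directions, cutting off the nonlinearity, inverting $d/dt-\tilde{L}_h$ on exponentially weighted spaces $C_\eta$ via the resolvent estimate of Hypothesis~\ref{hypo3-1}, running a Banach fixed-point argument, and obtaining $\mathcal{C}^k$ smoothness by fiber contraction over the scale of weights $j\eta<\gamma$ --- so it matches the proof the paper implicitly relies on.
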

Specifically, when the unstable spectrum $\sigma_+$ of $L$ is empty, there is
\begin{lemma}\label{lemma2}
  \textbf{(Parameter-dependent center manifolds theorem for empty unstable spec-trum)}  (P.59 of \cite{CN11}) Assume that $\sigma_+=\emptyset$,  Hypotheses \ref{hypo1}, \ref{hypo2} and \ref{hypo3-1} hold. Then in addition to the properties in Lemma \ref{lemma1} the following holds.\\
  The local center manifold $\mathcal{M}_0(\mu)$ is locally attracting. More precisely, if $u(0)\in\mathcal{O}_u$ and the solution $u(t;u(0))$ of (\ref{general-l}) satisfies $u(t;u(0))\in \mathcal{O}_u$ for all $t>0$, then there exists $\tilde{u}\in\mathcal{M}_0 \cap \mathcal{O}_u$ and $\gamma'>0$ such that $u(t;u(0))=u(t;\tilde{u})+O(e^{-\gamma't}) \ as\ t\rightarrow\infty$.
\end{lemma}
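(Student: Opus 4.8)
The plan is to reduce the statement to the contraction of a suitable integral operator, exploiting the fact that when $\sigma_+=\emptyset$ the hyperbolic part $L_h$ generates a semigroup that decays exponentially in forward time. First I would decompose any solution of (\ref{general-l}) through the spectral projections of Hypothesis \ref{hypo2}, writing $u=u_c+u_h$ with $u_c=P_0u\in\mathcal{E}_0$ and $u_h=P_hu\in\mathcal{X}_h$, so that the equation splits into the center component
\beq
\frac{du_c}{dt}=L_0u_c+P_0R(u_c+u_h,\mu)
\eeq
on the finite-dimensional space $\mathcal{E}_0$ and the hyperbolic component
\beq
\frac{du_h}{dt}=L_hu_h+P_hR(u_c+u_h,\mu).
\eeq
Because $\sigma_+=\emptyset$, the spectrum of $L_h$ equals $\sigma_-$ and lies in $\{\mathrm{Re}\,\lambda<-\gamma\}$; the resolvent estimate in Hypothesis \ref{hypo3-1}, together with this spectral location, yields for $t\ge0$ an exponential bound $\|e^{tL_h}\|_{\mathcal{L}(\mathcal{X}_h)}\le Ce^{-\gamma t}$, which is the single structural fact driving the attractivity.

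Next I would measure the distance of a trajectory to $\mathcal{M}_0(\mu)$ by $v(t)=u_h(t)-\Psi(u_c(t),\mu)$ and derive its evolution equation. Differentiating and using the chain rule together with the invariance identity characterizing $\Psi$ from Lemma \ref{lemma1}, the linear part of the $v$-equation is again governed by $L_h$, while the nonlinear remainder is controlled by $R$ being $\mathcal{C}^k$ with $R(0,0)=0$ and $D_uR(0,0)=0$ (Hypothesis \ref{hypo1}). Applying the variation-of-constants formula to the $v$-equation and inserting the bound on $e^{tL_h}$, a Gronwall argument on the neighborhood $\mathcal{O}_u$ — where the Lipschitz constant of $R$ can be made small — shows that $v(t)\to0$ at an exponential rate $\gamma'$ with $0<\gamma'<\gamma$. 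This establishes that every forward-bounded trajectory approaches the manifold.

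The heart of the statement, however, is not merely that the distance to $\mathcal{M}_0(\mu)$ decays but that there is a genuine asymptotic phase: a specific solution $\tilde u$ lying on the manifold whose full trajectory shadows $u(t;u(0))$. To produce it I would work in the Banach space $C_{\gamma'}=\{w\in\mathcal{C}^0([0,\infty),\mathcal{Z}):\sup_{t\ge0}e^{\gamma't}\|w(t)\|<\infty\}$ of exponentially decaying perturbations and set up a fixed-point problem for the pair describing the difference between the true trajectory and the sought manifold trajectory. The center component, being finite-dimensional with neutral spectrum $\sigma_0$, is handled by integrating the reduced equation on $\mathcal{M}_0(\mu)$, while the hyperbolic component is recovered by the forward variation-of-constants formula whose kernel $e^{(t-s)L_h}$ supplies the decay. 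Choosing $\mathcal{O}_u$ small enough that the nonlinearity is a contraction in $C_{\gamma'}$, the Banach fixed point theorem produces the shadowing trajectory, and its initial value determines $\tilde u\in\mathcal{M}_0\cap\mathcal{O}_u$ with $u(t;u(0))=u(t;\tilde u)+O(e^{-\gamma't})$.

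I expect the main obstacle to be this last construction of the asymptotic phase, rather than the decay of $v$. The difficulty is that the center direction is only neutral, so the shadowing trajectory must be solved for self-consistently together with the exponentially small hyperbolic correction; ensuring the combined map is well-defined and contractive on $C_{\gamma'}$ requires the finite-dimensionality of $\mathcal{E}_0$ from Hypothesis \ref{hypo2} and the sharp semigroup decay from Hypothesis \ref{hypo3-1} to work in tandem, with the spectral gap between $\sigma_0$ and $\sigma_-$ fixing the admissible range of $\gamma'$.
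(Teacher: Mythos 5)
You should first note what the paper actually does with this statement: it gives no proof at all. The lemma is quoted, with page citation, from Haragus and Iooss \cite{CN11} (p.~59), where it is the attractivity part of the parameter-dependent center manifold theorem; the paper simply imports it as a black box to justify Remark \ref{thm1-1}. So there is no in-paper argument to compare yours against, and the fair benchmark is the textbook proof your sketch implicitly reconstructs.

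Measured against that benchmark, your outline follows the standard route and is essentially correct: the spectral splitting $u=P_0u+P_hu$, the distance variable $v(t)=u_h(t)-\Psi(u_c(t),\mu)$ whose evolution is governed by $L_h$ plus a small Lipschitz perturbation, and the construction of the asymptotic phase $\tilde u$ by a contraction argument in an exponentially weighted space $C_{\gamma'}$ with $0<\gamma'<\gamma$ are exactly the ingredients of the Vanderbauwhede--Iooss/Haragus--Iooss proof. Two points deserve emphasis if you were to write this out in full. First, in infinite dimensions the location of $\sigma_-$ in $\{\mathrm{Re}\,\lambda<-\gamma\}$ alone does \emph{not} yield $\|e^{tL_h}\|_{\mathcal{L}(\mathcal{X}_h)}\le Ce^{-\gamma t}$; you correctly invoked the resolvent estimate of Hypothesis \ref{hypo3-1} alongside the spectral gap, and the reason this suffices is the Gearhart--Pr\"uss theorem, which is precisely why that hypothesis demands Hilbert spaces. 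This should be named explicitly rather than left as ``the single structural fact,'' since it is where the hypothesis earns its keep. Second, since $L$ is unbounded, the trajectories are mild solutions, so the variation-of-constants formula and the Gronwall step must be run in the mild formulation with the regularity $R\in\mathcal{C}^k(\mathcal{V}_u\times\mathcal{V}_\mu,\mathcal{Y})$ mediating between the spaces $\mathcal{Z}\hookrightarrow\mathcal{Y}\hookrightarrow\mathcal{X}$; your sketch glosses over this bookkeeping but it does not affect the structure of the argument. You also correctly identified the asymptotic phase, not the decay of $v$, as the genuinely delicate step.
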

Next, we give the hypothesis and lemmas about the normal form theorem in $\mathbb{R}^n$:
\begin{hypo}\label{hypo4}
Assume that $L$ and $R$ have the following properties:
\begin{enumerate}
\item[(i)] $L$ is a linear map in $\mathbb{R}^n$;
\item[(ii)] for some $k\geq2$, there exist neighborhoods $\mathcal{V}_u\subset\mathbb{R}^n$ and $\mathcal{V}_\mu\subset\mathbb{R}^m$ of 0 such that $R\in \mathcal{C}^k(\mathcal{V}_u\times\mathcal{V}_\mu, \mathbb{R}^n)$ and $R(0,0)=0,\ D_uR(0,0)=0$.
\end{enumerate}
\end{hypo}
\begin{lemma}\label{lemma3}
  \textbf{(Normal form for perturbed vector fields)}  (P.110 of \cite{CN11}) Assume that Hypothe-sis \ref{hypo4} holds. Then for any positive integer  $p$, $2\leq p\leq k $ , there exist neighborhoods $\mathcal{O}_1$ and $\mathcal{O}_2$ of 0 in $\mathbb{R}^n$ and $\mathbb{R}^m$, respectively, such that for any $\mu\in\mathcal{O}_2$, there is a polynomial $\Phi_\mu:\mathbb{R}^n\rightarrow\mathbb{R}^n$ of degree $p$ with the following properties:
  \begin{enumerate}
\item[(i)] The coefficients of the monomials of degree $q$ in $\Phi_\mu$ are functions of $\mu$ of class $\mathcal{C}^{k-p}$, and $\Phi_0(0)=0, \ D_u\Phi_0(0)=0$;
\item[(ii)] For $v\in\mathcal{O}_1$, the polynomial change of variable $u=v+\Phi_\mu(v)$,
transforms equation (\ref{general-l}) into the “normal form" $\frac{dv}{dt}=Lv+N_\mu(v)+\rho(v,\mu)$,
and the following properties hold:
\begin{enumerate}[a.]
    \item For any $\mu\in\mathcal{O}_2$, $N_\mu$ is a polynomial $\mathbb{R}^n\rightarrow\mathbb{R}^n$ of degree $p$
    and $N_0(0)=0,\ D_vN_0(0)=0$;
    \item The equality $N_\mu(e^{tL^*}v)=e^{tL^*}N_\mu(v)$ holds for all $(t,v)\in\mathbb{R}\times\mathbb{R}^n$ and $\mu\in\mathcal{O}_2$;
    \item The map $\rho$ belongs to $\mathcal{C}^k(\mathcal{O}_1\times\mathcal{O}_2,\mathbb{R}^n)$, and $\rho(v,\mu)=o(\|v\|^p)$ for all $\mu\in\mathcal{O}_2$.
\end{enumerate}
\end{enumerate}
\end{lemma}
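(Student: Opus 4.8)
The plan is to establish this via the classical inductive normal-form construction: eliminate the nonresonant nonlinear terms degree by degree by a finite sequence of near-identity polynomial changes of variables, keeping $\mu$ as a parameter throughout. First I would invoke Hypothesis~\ref{hypo4} to expand $R$ in powers of $u$ with $\mu$-dependent coefficients,
\[
R(u,\mu)=\sum_{q=0}^{p}R_q(u,\mu)+\tilde\rho(u,\mu),
\]
where each $R_q(\cdot,\mu)$ is a homogeneous polynomial of degree $q$ in $u$ whose coefficients are the Taylor data $\tfrac{1}{q!}D_u^qR(0,\mu)$, hence of class $\mathcal{C}^{k-q}$ in $\mu$, and the remainder satisfies $\tilde\rho\in\mathcal{C}^k$ with $\tilde\rho(v,\mu)=o(\|v\|^p)$ for each fixed $\mu$. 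The conditions $R(0,0)=0$ and $D_uR(0,0)=0$ force $R_0(\cdot,0)=0$ and $R_1(\cdot,0)=0$, which will yield the vanishing conditions in the conclusion. Expanding in $u$ alone (rather than jointly in $(u,\mu)$) is what produces a remainder of order $o(\|v\|^p)$ in $v$, as required by (ii)c, together with the $\mathcal{C}^{k-p}$ regularity of (i).

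The algebraic engine is the homological operator. On the finite-dimensional space $\mathcal{H}_q$ of homogeneous degree-$q$ polynomial maps $\mathbb{R}^n\to\mathbb{R}^n$ I would define
\[
(\mathrm{ad}_L\,\Phi)(v)=D\Phi(v)\,Lv-L\Phi(v),
\]
and observe, by substituting the near-identity transformation $u=v+\Phi_q(v,\mu)$ into (\ref{general-l}) and collecting terms, that the degree-$q$ contribution is replaced by $R_q-\mathrm{ad}_L\Phi_q$ while only terms of degree strictly greater than $q$ are altered. Normalizing degree $q$ thus reduces to solving the homological equation $R_q-\mathrm{ad}_L\Phi_q=N_q$ with $N_q$ lying in a fixed complement of $\mathrm{Ran}(\mathrm{ad}_L)$. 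I would endow $\mathcal{H}_q$ with the inner product under which $\mathrm{ad}_{L^*}$ is the adjoint of $\mathrm{ad}_L$; this gives the orthogonal decomposition $\mathcal{H}_q=\mathrm{Ran}(\mathrm{ad}_L)\oplus\mathrm{Ker}(\mathrm{ad}_{L^*})$ and singles out the complement $\mathrm{Ker}(\mathrm{ad}_{L^*})$. Applying the solve to the $\mu$-dependent family $R_q(\cdot,\mu)$ yields $\Phi_q(\cdot,\mu)$ and $N_q(\cdot,\mu)\in\mathrm{Ker}(\mathrm{ad}_{L^*})$, with the $\mu$-regularity of $R_q$ inherited, since the solve is a fixed linear map on $\mathcal{H}_q$.

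Iterating over the degrees $q$ up to $p$ and composing the transformations gives $\Phi_\mu=\sum_q\Phi_q(\cdot,\mu)$, the normal form $N_\mu=\sum_qN_q(\cdot,\mu)$, and $\rho$ collecting $\tilde\rho$ together with the higher-order terms generated along the way. Property (i) and the normalization $\Phi_0(0)=0$, $D_u\Phi_0(0)=0$ follow from $R_0(\cdot,0)=R_1(\cdot,0)=0$ and the worst-case $\mathcal{C}^{k-p}$ dependence of the coefficients on $\mu$; property (ii)a is built into the construction; and (ii)b is exactly the meaning of the chosen complement, since $N_q\in\mathrm{Ker}(\mathrm{ad}_{L^*})$ reads $DN_q(v)L^*v=L^*N_q(v)$, which integrates to $N_\mu(e^{tL^*}v)=e^{tL^*}N_\mu(v)$.

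I expect the difficulty to be concentrated in two places. The decisive algebraic point is constructing the inner product on $\mathcal{H}_q$ for which $(\mathrm{ad}_L)^*=\mathrm{ad}_{L^*}$, since this single choice simultaneously produces the clean direct-sum splitting and forces the $e^{tL^*}$-equivariance of (ii)b; verifying the adjoint identity calls for a careful computation in a monomial basis of $\mathcal{H}_q$. The decisive analytic point is the remainder and smoothness bookkeeping: one must check that composing the successive near-identity changes of variables preserves $\mathcal{C}^k$ regularity, accounts for the loss of exactly $p$ derivatives in $\mu$, and leaves $\rho(v,\mu)=o(\|v\|^p)$ as claimed in (ii)c. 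The stage-by-stage tracking of which degrees are disturbed is routine but must be organized so that normalizing degree $q$ never re-pollutes the already-normalized lower degrees.
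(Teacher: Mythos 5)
The paper gives no proof of Lemma~\ref{lemma3}: it is quoted as a known result from Haragus--Iooss \cite{CN11} (p.~110) and used as a black box, so there is no internal proof to compare yours against. Your proposal correctly reconstructs the standard argument from that cited source --- Taylor expansion in $u$ with $\mu$-dependent coefficients of class $\mathcal{C}^{k-q}$, the homological operator $\mathrm{ad}_L\Phi(v)=D\Phi(v)Lv-L\Phi(v)$ acting degree by degree, the inner product on homogeneous polynomials for which $(\mathrm{ad}_L)^*=\mathrm{ad}_{L^*}$ giving the splitting $\mathcal{H}_q=\mathrm{Ran}(\mathrm{ad}_L)\oplus\mathrm{Ker}(\mathrm{ad}_{L^*})$, and the identification of $\mathrm{Ker}(\mathrm{ad}_{L^*})$ with $e^{tL^*}$-equivariance --- so it is essentially the same approach as the proof the paper implicitly relies on.
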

\begin{lemma}\label{lemma4}  (P.11 of \cite{CN11})
Let f be a complex-valued function of class $\mathcal{C}^{k}$, $k\geq1$, defined in a neighborhood $\mathcal{O}$ of the origin in $\{(z,\bar{z}): z\in \mathbb{C}\}$, and which verifies
\beqn
f(e^{i\omega t}z, e^{-i\omega t}\bar{z})=e^{i\omega t}f(z,\bar{z})\  for\  any\  t\in\mathbb{R}\  and\  (z,\bar{z})\in\mathcal{O}.
\eeqn
Then there exists an even, complex-valued function $g$ of class $\mathcal{C}^{k-1}$ defined in a neighborhood of 0 in $\mathbb{R}$ such that $f(z,\bar{z})=zg(|z|)$. Furthermore, if $f$ is a polynomial, then $g$ is an even polynomial, $g(|z|)=\phi(|z|^2)$ for a polynomial $\phi$.
\end{lemma}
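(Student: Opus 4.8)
The plan is to exploit the fact that the functional equation states precisely that $f$ is equivariant for the circle action $z\mapsto e^{i\omega t}z$ on the domain together with the weight-one action $w\mapsto e^{i\omega t}w$ on the target; assuming $\omega\neq 0$ (the Hopf frequency), this forces the angular part of $f$ to be exactly $e^{i\theta}$. Concretely, for $z\neq 0$ I would write $z=re^{i\theta}$ with $r=|z|>0$ and choose $t$ so that $\omega t=-\theta$; substituting into the hypothesis gives $f(r,r)=e^{-i\theta}f(z,\bar z)$, that is, $f(z,\bar z)=(z/|z|)\,f(|z|,|z|)$. In this way everything is reduced to understanding the single-variable restriction $F(s):=f(s,s)$ for real $s$ near $0$, together with the explicit factor $z/|z|$.

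Next I would record the regularity and parity of $F$. Taking $\omega t=\pi$ in the hypothesis yields $f(-z,-\bar z)=-f(z,\bar z)$, so $f$ is odd; in particular $f(0,0)=0$, and $F$ is an odd function of class $\mathcal C^{k}$ with $F(0)=0$. The decisive step is then a division (Hadamard) argument: writing $F(s)=s\int_0^1 F'(us)\,du$, the function $g(s):=\int_0^1 F'(us)\,du$ is of class $\mathcal C^{k-1}$ on a neighborhood of $0$ in $\mathbb R$ and satisfies $F(s)=s\,g(s)$. Since $F$ is odd its derivative $F'$ is even, whence $g$ is even. Finally, for $z\neq 0$ one has $f(z,\bar z)=(z/|z|)F(|z|)=(z/|z|)\,|z|\,g(|z|)=z\,g(|z|)$, and both sides vanish at $z=0$, giving by continuity the claimed representation $f(z,\bar z)=z\,g(|z|)$ with $g$ even and of class $\mathcal C^{k-1}$.

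For the polynomial refinement I would argue algebraically rather than analytically. Expanding $f(z,\bar z)=\sum_{m,n}c_{mn}z^m\bar z^n$ and inserting it into the hypothesis gives $\sum_{m,n}c_{mn}e^{i\omega t(m-n)}z^m\bar z^n=\sum_{m,n}c_{mn}e^{i\omega t}z^m\bar z^n$ for all $t$; since the monomials are linearly independent and $\omega\neq 0$, matching coefficients forces $c_{mn}=0$ unless $m-n=1$. Hence $f(z,\bar z)=z\sum_{n\geq 0}c_{n+1,n}(z\bar z)^n=z\,\phi(|z|^2)$ with $\phi(\sigma)=\sum_{n}c_{n+1,n}\sigma^n$ a genuine polynomial, so that $g(|z|)=\phi(|z|^2)$ is an even polynomial, as required.

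The main obstacle I anticipate is not the angular reduction, which is essentially forced, but controlling the regularity at the origin: the naive formula $g(s)=F(s)/s$ is only defined away from $s=0$, and it is the vanishing of $F$ to first order together with the integral (Hadamard) representation that produces a genuinely $\mathcal C^{k-1}$ extension — this is precisely where the loss of one derivative enters. A secondary point requiring care is the bookkeeping of $\bar z$ as a non-holomorphic variable, so that the restriction to the real axis $z=\bar z=s$ is legitimate and the parity statement about $F$ transfers correctly to $g$.
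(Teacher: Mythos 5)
Your proof is correct; note that the paper itself gives no proof of this lemma --- it is quoted verbatim from \cite{CN11} --- and your argument (rotating $z$ onto the positive real axis via $\omega t=-\theta$, extracting the factor $z/|z|$, applying the Hadamard integral representation to the odd restriction $F(s)=f(s,s)$ to obtain an even $\mathcal{C}^{k-1}$ factor $g$, and matching Fourier weights $m-n=1$ in the polynomial case) is essentially the standard proof found in that reference.
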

\begin{remark}
Provided that an infinite-dimensional system $\frac{du}{dt}=Lu+R(u,\mu)$ satisfies the assump-tions in center manifold Lemma \ref{lemma1}, then the reduced system satisfies Hypothesis \ref{hypo4}, so the normal form Lemma \ref{lemma2} can be employed.
\end{remark}
\section{Existence of Breathing Pulses}\label{3}
In this section, we study whether breathing pulses arise when $G_1$ and $G_2$ exhibit different linearity. First, we recall the analysis of existence and stability in \cite{CLSZ,DvHS,LSZ}. Therein, the explicit leading-order expressions of the pulse solution and the corresponding  eigenfunction are given. Then, we discuss the dynamic behavior when $G_1$ and $G_2$ are linear or nonlinear separately. In order to avoid the complexity of the discussion and the stacking of formulas, we make some simplifications. At last, a concrete example which ever appeared in \cite{DvHS} is raised, whose existence of breathing pulse can be interpreted theoretically within the framework of our previous analysis.
\subsection{The Existence and Stability of the Pinned Pulse}\label{31}
In order to obtain the leading order expressions of the pinned pulse solution and its eigenfunction, which play an important role in analyzing the normal form, we first present the basic mechanism of the existence and stability.

Consider the system (\ref{orginal}), owing to the strong spatial localization of impurities, it can be analyzed in two spatial scales, $x$ versus $\xi:=\frac{x}{\varepsilon^2}$. More specifically, the spatial domain is divided into three parts $I_s^-:=(\-\infty, -\varepsilon)$, $I_f:=[-\varepsilon, \varepsilon]$ and $I_s^+:=(\varepsilon, \infty)$. In $I_s^\pm$, the system (\ref{orginal}) can be regarded as a semi-coupled linear system, whose solution can be uniquely determined due to the exponential decay at infinity. In $I_f$, impurities are dominant, and the impact of $-\mu U_1$ and $-bU_1-\mu U_2$ are least. Hence, the leading-order accumulated changes of $\frac{dU_1}{dx}$ and $\frac{dU_2}{dx}$ can be calculated from the value of $\alpha G_1$ and $\beta G_2$. Through equalizing these accumulated changes in these two different scales, we derive the continuous solution we expect. The following stability analysis is completed in a similar way because of the same structure of the eigenvalue system and the negative essential spectrum. Actually, two different approaches to obtain the matching conditions of existence and stability are given in \cite{DvHS,LSZ,CLSZ}. One calculates the fast and slow manifolds separately, while the other gives the explicit expressions of solutions by method of variation of constant.

To system (\ref{orginal}), if
\beali\label{existence}
2\sqrt{\mu}C_1&=\alpha G_1\left(C_1, C_2+\frac{bC_1}{(-1+D)\mu}\right),\\
2\left(\sqrt{\frac{\mu}{D}}C_2+\frac{bC_1}{(-1+D)\sqrt{\mu}}\right)&=\frac{\beta}{D}G_2\left(C_1, C_2+\frac{bC_1}{(-1+D)\mu}\right)
\enali
admits non-degenerate solutions $C_1$ and $C_2$, then (\ref{orginal}) exists a nontrivial pinned pulse solution $\Gamma_p(x)=(\Gamma_{1,p}(x),\Gamma_{2,p}(x))^T=(U_{1,p}(x)+O(\varepsilon),U_{2,p}(x)+O(\varepsilon))^T$ with
\beq\label{Dno11}
U_{1,p}(x)=\begin{cases}
C_1e^{\sqrt{\mu} x}, & x \in I_s^-,\\
C_1,& x \in I_f,\\
C_1e^{-\sqrt{\mu}x}, & x \in I_s^+,
\end{cases}
\ U_{2,p}(x)=\begin{cases}
C_2e^{\sqrt{\frac{\mu}{D}}x}+\frac{b C_1}{(-1+D)\mu}e^{\sqrt{\mu}x}, & x \in I_s^-,\\
C_2+\frac{b C_1}{(-1+D)\mu}, & x \in I_f,\\
C_2e^{-\sqrt{\frac{\mu}{D}}x}+\frac{b C_1}{(-1+D)\mu}e^{-\sqrt{\mu}x}, & x \in I_s^+.
\end{cases}
\eeq
As for the spectral (and nonlinear) stability of the above pinned pulse solution, we linearize (\ref{orginal}) around $\Gamma_p$ and yield the eigenvalue problem
\beali\label{eigenfunction}
0&=\frac{d^2 P_1}{d x^2}-(\mu+\lambda) P_1+\frac{\alpha}{\varepsilon^2}I(\frac{x}{\varepsilon^2})\left(\frac{\partial G_1}{\partial U_1}(U_{1,p},U_{2,p})P_1+\frac{\partial G_1}{\partial U_2}(U_{1,p},U_{2,p})P_2\right),\\
0&=D\frac{d^2 P_2}{d x^2}-b P_1-(\mu+\lambda) P_2+\frac{\beta}{\varepsilon^2}I(\frac{x}{\varepsilon^2})\left(\frac{\partial G_2}{\partial U_1}(U_{1,p},U_{2,p})P_1+\frac{\partial G_2}{\partial U_2}(U_{1,p},U_{2,p})P_2\right).\nonumber
\enali
With $\sigma_{ess}=(-\infty, -\mu]$, it does not yield instabilities. Consequently, the stability is fully determined by the locations of eigenvalues. Hence, if
\beali\label{stability}
&\alpha\left(\frac{\partial G_1}{\partial U_1}\left(C_1,C_2+\frac{b C_1}{(-1+D)\mu}\right)C_3+\frac{\partial G_1}{\partial U_2}\left(C_1,C_2+\frac{b C_1}{(-1+D)\mu}\right)(C_4+\frac{b C_3}{(-1+D)(\mu+\lambda)})\right)\\
&=2\sqrt{\mu+\lambda}C_3,\\
&\frac{\beta}{D}\left(\frac{\partial G_2}{\partial U_1}\left(C_1,C_2+\frac{b C_1}{(-1+D)\mu}\right)C_3+\frac{\partial G_2}{\partial U_2}\left(C_1,C_2+\frac{b C_1}{(-1+D)\mu}\right)(C_4+\frac{b C_3}{(-1+D)(\mu+\lambda)})\right)\\
&=2\left(\sqrt{\frac{\mu+\lambda}{D}}C_4+\frac{b C_3}{(-1+D)\sqrt{\mu+\lambda}}\right)
\enali
admits solutions $\lambda$ ($\lambda \in \mathbb{C}/(-\infty, -\mu]$) which are all with negative real parts, then this pinned pulse solution is stable.
We denote this eigenfunction as $(P_1(x)+O(\varepsilon), P_2(x)+O(\varepsilon))^T$, whose expression can be given as
\beq\label{Dno22}
P_1(x)=\begin{cases}
C_3e^{\sqrt{\mu+\lambda}x}, & x \in I_s^-,\\
C_3, & x \in I_f,\\
C_3e^{-\sqrt{\mu+\lambda}x}, & x \in I_s^+,
\end{cases}
\ P_2(x)=\begin{cases}
C_4e^{\sqrt{\frac{\mu+\lambda}{D}}x}+\frac{b C_3}{(-1+D)(\mu+\lambda)}e^{\sqrt{\mu+\lambda}x}, & x \in I_s^-,\\
C_4+\frac{b C_3}{(-1+D)(\mu+\lambda)}, & x \in I_f,\\
C_4e^{-\sqrt{\frac{\mu+\lambda}{D}}x}+\frac{b C_3}{(-1+D)(\mu+\lambda)}e^{-\sqrt{\mu+\lambda}x}, & x \in I_s^+.
\end{cases}
\eeq
\subsection{$G_1$ and $G_2$ are Linear}\label{32}
We can cut through our analysis from one of the simplest perspectives, i.e., we assume that $G_1$ and $G_2$ are linear.

In this case, the existence condition can be verified easily because equations (\ref{existence}) are quadratic equations. For the stability condition, i.e., the solutions of equations (\ref{stability}), through analysis, it can be simplified as:
\beq\label{lambda-2}
\frac{2(\mu+\lambda)^\frac{3}{2}}{\alpha G_{13}\sqrt{D}}-\frac{G_{12}(\mu+\lambda)}{G_{13}\sqrt{D}}+\frac{b}{D+\sqrt{D}}=
\frac{\beta G_{22}(\mu+\lambda)^\frac{1}{2}}{2D}+\frac{\beta G_{22}(\mu+\lambda)}{\alpha G_{13} D}-\frac{\beta G_{12} G_{23}(\mu+\lambda)^\frac{1}{2}}{2DG_{13}},
\eeq
where $G_{ij}$ represents the $j$th coefficient of $G_i$ and $G_i=G_{i1}+G_{i2}U_1+G_{i3}U_2$ with $i=1,2$ and $j=1,2,3$. Denote $t:=\sqrt{\mu+\lambda}$, then $\rm{Re}$ $ t>0$ and  (\ref{lambda-2}) is equal to
\beq\label{lambda-3}
2t^3-(\alpha G_{12}+\frac{\beta G_{23}}{\sqrt{D}})t^2+\frac{\alpha \beta}{2\sqrt{D}}(G_{12}G_{23}-G_{13}G_{22})t+\frac{b\alpha G_{13}}{\sqrt{D}+1}=0.
\eeq
Denote $A:=-(\alpha G_{12}+\frac{\beta G_{23}}{\sqrt{D}})$, $B:=\frac{\alpha \beta}{2\sqrt{D}}(G_{12}G_{23}-G_{13}G_{22})$ and $E:=\frac{b\alpha G_{13}}{\sqrt{D}+1}$, we rewrite equation (\ref{lambda-3}) as
\beq\label{H}
H(t):=2t^3+At^2+Bt+E=0.
\eeq
For $B<0$, $H(t)$ has a minimum at $t=\frac{-A+\sqrt{A^2-6B}}{6}$. If, in addition, $E>0$ and $H(\frac{-A+\sqrt{A^2-6B}}{6})<0$, then (\ref{H}) has two real-valued positive solutions. When the value $H(\frac{-A+\sqrt{A^2-6B}}{6})$ becomes positive, these two real-valued solutions merge and become complex-valued. As a result, we can tune $\mu$ such that $\rm{Re}\lambda=0$. Hence, this pinned pulse solution can undergo a Hopf bifurcation, we denote this bifurcation parameter as $\hat{\mu}$. Next, we discuss the center normal form. We transform the system (\ref{orginal}) by $ \tilde{U}_1=U_1-\Gamma_{1,p},\ \tilde{U}_2=U_2-\Gamma_{2,p},\ \tilde{\mu}=\mu-\hat{\mu}$, to
\beq\label{general-L}
\frac{d\tilde{U}}{dt}=L\tilde{U}+R(\tilde{U},\tilde{\mu}),\nonumber
\eeq
where
\beqn
\begin{aligned}
\tilde{U}=
\begin{pmatrix}
\tilde{U}_1 \\
\tilde{U}_2
\end{pmatrix},
\ \ R(\tilde{U},\tilde{\mu})=
\begin{pmatrix}
-\tilde{\mu} \tilde{U}_1 \\
-\tilde{\mu} \tilde{U}_2
\end{pmatrix},
\end{aligned}
\eeqn
\beqn
\begin{aligned}
L=
\begin{pmatrix}
\frac{d^2}{dx^2}-\hat{\mu}+\frac{\alpha}{\varepsilon^2}I(\frac{x}{\varepsilon^2})\frac{\partial G_1}{\partial U_1}(\Gamma_{1,p},\Gamma_{2,p}) & \frac{\alpha}{\varepsilon^2}I(\frac{x}{\varepsilon^2})\frac{\partial G_1}{\partial U_2}(\Gamma_{1,p},\Gamma_{2,p}) \\
-b+\frac{\beta}{\varepsilon^2}I(\frac{x}{\varepsilon^2})\frac{\partial G_2}{\partial U_1}(\Gamma_{1,p},\Gamma_{2,p}) & D\frac{d^2}{dx^2}-\hat{\mu}+\frac{\beta}{\varepsilon^2}I(\frac{x}{\varepsilon^2})\frac{\partial G_2}{\partial U_2}(\Gamma_{1,p},\Gamma_{2,p})  \\
\end{pmatrix}.
\end{aligned}
\eeqn
 Since $G_1$ and $G_2$ are linear, actually,
\beq
L=
\begin{pmatrix}
\frac{d^2}{dx^2}-\hat{\mu}+\frac{\alpha G_{12}}{\varepsilon^2}I(\frac{x}{\varepsilon^2}) & \frac{\alpha G_{13}}{\varepsilon^2}I(\frac{x}{\varepsilon^2}) \\
-b+\frac{\beta G_{22}}{\varepsilon^2}I(\frac{x}{\varepsilon^2}) & D\frac{d^2}{dx^2}-\hat{\mu}+\frac{\beta G_{23}}{\varepsilon^2}I(\frac{x}{\varepsilon^2}) \nonumber \\
\end{pmatrix}.
\eeq
We can observe that $R$ is linear with respect to $\tilde{U}$ and $\tilde{\mu}$ respectively, which indicates that the $\Psi$ we calculated by Lemma \ref{lemma1} is linear with $\mu$-dependent coefficients. Therefore, there is no way for the center manifold to own a periodic orbit. Besides, the derivative of $R$ with respect to $\tilde{U}$ is $-\tilde{\mu}$, which decides the in(stability) of $\Gamma_p$. Here, we use $\mathcal{X}=\left(L^2(\mathbb{R}^2) \right)^2$, $\mathcal{Y}=\mathcal{Z}=\left(H^2(\mathbb{R}^2) \right)^2$. And we get the conclusions:
\begin{remark}\label{linear-rem}
  The differential equation (\ref{orginal}) with linear $G_1$ and linear $G_2$ has precisely one equili-brium $\Gamma_p(\mu)$, which is stable when $\mu>\hat{\mu}$ and unstable when $\mu<\hat{\mu}$. And, there exists no breathing phenomenon near the pinned pulse solution.
\end{remark}
\begin{remark}\label{rem1}
  When $D=1$, the expressions for (\ref{Dno11}) and (\ref{Dno22}) fail, the corresponding pulse solution and eigenfunction become
\beq\label{D1}
U_{2,p}(x)=\begin{cases}
C_2e^{\sqrt{\mu} x}+\frac{b C_1}{4\mu}e^{\sqrt{\mu} x}\left(-1+2\sqrt{\mu} x\right), & x \in I_s^-,\\
C_2-\frac{b C_1}{4\mu}, & x \in I_f,\\
C_2e^{-\sqrt{\mu} x}+\frac{b C_1}{4\mu}e^{-\sqrt{\mu} x}\left(-1-2\sqrt{\mu} x\right), & x \in I_s^+,\nonumber
\end{cases}
\eeq
and
\beq\label{D2}
P_2(x)=\begin{cases}
C_4e^{\sqrt{\mu+\lambda} x}+\frac{b C_3}{4(\mu+\lambda)}e^{\sqrt{\mu+\lambda} x}\left(-1+2\sqrt{\mu+\lambda} x\right), & x \in I_s^-,\\
C_4-\frac{b C_3}{4(\mu+\lambda)}, & x \in I_f,\\
C_4e^{-\sqrt{\mu+\lambda} x}+\frac{b C_3}{4(\mu+\lambda)}e^{-\sqrt{\mu+\lambda} x}\left(-1-2\sqrt{\mu+\lambda} x\right), & x \in I_s^+.\nonumber
\end{cases}
\eeq
Here $C_1$, $C_2$ satisfy
\beali
2\sqrt{\mu}C_1&=\alpha G_1\left(C_1, C_2-\frac{b C_1}{4\mu}\right),\\
2\left(\sqrt{\mu}C_2+\frac{b C_1}{4\sqrt{\mu}}\right)&=\beta G_2\left(C_1, C_2-\frac{b C_1}{4\mu}\right).\nonumber
\enali
$C_3$, $C_4$ satisfy
\beali
&2\sqrt{\mu+\lambda}C_3\\
&=\alpha\left(\frac{\partial G_1}{\partial U_1}\left(C_1,C_2-\frac{b C_1}{4\mu}\right)C_3+\frac{\partial G_1}{\partial U_2}\left(C_1,C_2-\frac{b C_1}{4\mu}\right)(C_4-\frac{b C_3}{4(\mu+\lambda)})\right),\\
&2\left(\sqrt{\mu+\lambda}C_4+\frac{b C_3}{4\sqrt{\mu+\lambda}}\right)\\
&=\frac{\beta}{D}\left(\frac{\partial G_2}{\partial U_1}\left(C_1,C_2-\frac{b C_1}{4\mu}\right)C_3+\frac{\partial G_2}{\partial U_2}\left(C_1,C_2-\frac{b C_1}{4\mu}\right)(C_4-\frac{b C_3}{4(\mu+\lambda)})\right),\nonumber
\enali
which yields
\beqn
\frac{2}{\alpha G_{13}}(\mu+\lambda)^\frac{3}{2}-\frac{G_{12}}{G_{13}}(\mu+\lambda)+\frac{b}{2}=
\frac{\beta G_{22}}{2}(\mu+\lambda)^\frac{1}{2}+\frac{\beta G_{23}}{\alpha G_{13} }(\mu+\lambda)-\frac{\beta G_{12} G_{23}}{2G_{13}}(\mu+\lambda)^\frac{1}{2}.
\eeqn
It equals to equation (\ref{lambda-2}) when $D=1$. Hence, we get the same conclusion as in Remark \ref{linear-rem}.
\end{remark}
\subsection{$G_1$ is Nonlinear and $G_2$ is Linear}\label{33}
Next, we examine the sub-simple situation, i.e., $G_1$ is nonlinear and $G_2$ is linear. We further denote $G_1=G_{11}+G_{12}U_1+G_{13}U_2+G_{14}U_1^2+G_{15}U_2^2+G_{16}U_1U_2+...$. In order to reveal the essence, we shift our focus on the coupled case when there is only one nonlinear term, i.e., $G_1=G_{11}+G_{15}U_2^2$. In order not to bring in cumbersome formulas and complex expressions from $G_2$, we discuss from three situations.
\subsubsection{$G_2=G_{21}$ }
First, we assume that $G_2$ is a constant function with $G_2=G_{21}$, then (\ref{orginal}) becomes
\beali\label{orginal-nonlinear-1}
  \frac{\partial U_1}{\partial t}&=\frac{\partial^2 U_1}{\partial x^2}-\mu U_1+\frac{\alpha}{\varepsilon^2}I(\frac{x}{\varepsilon^2})(G_{11}+G_{15}U_2^2),\\
  \frac{\partial U_2}{\partial t}&=D\frac{\partial^2 U_2}{\partial x^2}-b U_1-\mu U_2+\frac{\beta}{\varepsilon^2}I(\frac{x}{\varepsilon^2})(G_{21}).
\enali
As before, system (\ref{orginal-nonlinear-1}) admits the stationary pinned pulse solution like (\ref{Dno11}) if
\beali\label{cond1}
2\sqrt{\mu}C_1&=\alpha\left(G_{11} +G_{15}(C_2+\frac{b C_1}{(-1+D)\mu})^2\right),\\
\frac{\beta G_{21}}{D}&=2\left(\sqrt{\frac{\mu}{D}}C_2+\frac{bC_1}{(-1+D)\sqrt{\mu}}\right)
\enali
admits non-degenerate solutions $C_1$ and $C_2$, i.e., $\Delta=4\mu+\frac{4\alpha G_{15} b}{(1+\sqrt{D})\mu}\left(\frac{\beta G_{21}}{\sqrt{D}}-\frac{\alpha b G_{11}}{\mu(1+\sqrt{D})}\right)>0$. Here, we assume that $D\neq1$. Unless otherwise stated, we adhere to this assumption in the following.

Then, under this assumption of existence, consider solutions of the eigenvalue equation
\beali\label{orginal-nonlinear-2}
  0&=\frac{d^2 P_1}{d x^2}-(\mu+\lambda) P_1+\frac{\alpha}{\varepsilon^2}I(\frac{x}{\varepsilon^2})(2G_{15}\Gamma_{2,p}(x)P_2),\\
  0&=D\frac{d^2 P_2}{d x^2}-b P_1-(\mu+\lambda) P_2,
\enali
i.e., the solutions of
\beali\label{cond2}
2\sqrt{\mu+\lambda}C_3&=2\alpha G_{15}\left(C_2+\frac{b C_1}{(-1+D)\mu}\right)\left(C_4+\frac{b C_3}{(-1+D)(\mu+\lambda)}\right),\\
0&= 2\left(\sqrt{\frac{\mu+\lambda}{D}}C_4+\frac{b C_3}{(-1+D)\sqrt{\mu+\lambda}}\right).
\enali
We solve it to get
\beqn
\begin{aligned}
\mu+\lambda &=\left|\left(C_2+\frac{b C_1}{(-1+D)\mu}\right)\frac{-\alpha b G_{15} }{1+\sqrt{D}}\right|^{\frac{2}{3}},\\
\mu+\lambda &=\left|\left(C_2+\frac{b C_1}{(-1+D)\mu}\right)\frac{-\alpha b G_{15} }{1+\sqrt{D}}\right|^{\frac{2}{3}}(-\frac{1}{2}+\frac{\sqrt{3}}{2}i),\\
\mu+\lambda &=\left|\left(C_2+\frac{b C_1}{(-1+D)\mu}\right)\frac{-\alpha b G_{15} }{1+\sqrt{D}}\right|^{\frac{2}{3}}(-\frac{1}{2}-\frac{\sqrt{3}}{2}i),
\end{aligned}
\eeqn
where $\left|\left(C_2+\frac{b C_1}{(-1+D)\mu}\right)\frac{-\alpha b G_{15}}{1+\sqrt{D}}\right|^{\frac{2}{3}}$ is positive and real. This outcome implies that Hopf bifurcation is impossible in this case regardless of $\mu$ values.
\begin{remark}\label{th1}
  There exists no Hopf bifurcation in system (\ref{orginal-nonlinear-1}).
\end{remark}
\subsubsection{$G_2=G_{22}U_1$ }
Next, we consider the case that $G_2$ is simple linear in $U_1$, i.e.,
 \beali\label{orginal-nonlinear-3}
  \frac{\partial U_1}{\partial t}&=\frac{\partial^2 U_1}{\partial x^2}-\mu U_1+\frac{\alpha}{\varepsilon^2}I(\frac{x}{\varepsilon^2})(G_{11}+G_{15}U_2^2),\\
  \frac{\partial U_2}{\partial t}&=D\frac{\partial^2 U_2}{\partial x^2}-b U_1-\mu U_2+\frac{\beta}{\varepsilon^2}I(\frac{x}{\varepsilon^2})(G_{22}U_1).
\enali
Provided that
\begin{flalign*}
& (A1)\ \Delta=4\mu-4\alpha^2G_{15}G_{11}\left(\frac{\beta G_{22}}{2\sqrt{\mu D}}-\frac{b}{\mu(1+\sqrt{D})}\right)^2>0,&
\end{flalign*}
then system (\ref{orginal-nonlinear-3}) exists a unique pinned pulse solution.

Under this assumption, the eigenvalues can be calculated by
\beq\label{cubic}
(\mu+\lambda)^{\frac{3}{2}}=\alpha G_{15}\left(C_2+\frac{b C_1}{(-1+D)\mu}\right)\left(\frac{\beta G_{22}}{2\sqrt{D}}(\mu+\lambda)^{\frac{1}{2}}-\frac{b}{1+\sqrt{D}}\right).
\eeq
Similarly, denote $B_1:=-\frac{\alpha \beta G_{15}G_{22}}{2\sqrt{D}}\left(C_2+\frac{b C_1}{(-1+D)\mu}\right)$, $E_1:=\frac{\alpha b G_{15}}{1+\sqrt{D}}\left(C_2+\frac{b C_1}{(-1+D)\mu}\right)$, we rewrite (\ref{cubic}) as
\beq
H_1(t):=t^3+B_1t+E_1=0.\nonumber
\eeq
If $E_1>0$, $B_1<0$ and $H_1(\sqrt{\frac{-B_1}{3}})<0$, then there exist two real-valued positive solutions. These two real-valued solutions merge and become complex-valued when $H_1(\sqrt{\frac{-B_1}{3}})>0$. Hence, if
\begin{flalign*}
&(A2)\   \frac{\alpha b G_{15}}{1+\sqrt{D}}\left(C_2+\frac{b C_1}{(-1+D)\mu}\right)>0;&\\
&(A3)\  \alpha \beta G_{15}G_{22}\left(C_2+\frac{b C_1}{(-1+D)\mu}\right)>0;   \\
&(A4)\   \left(\frac{\alpha \beta G_{15}G_{22}}{3}\left(C_2+\frac{b C_1}{(-1+D)\mu}\right)\right)^\frac{3}{2}+\frac{\alpha b G_{15}}{1+\sqrt{D}}\left(C_2+\frac{b C_1}{(-1+D)\mu}\right)\\
&\ \ \ \ \ -\alpha \beta G_{15}G_{22}\left(C_2+\frac{b C_1}{(-1+D)\mu}\right)\left(\frac{\alpha \beta G_{15}G_{22}}{3}\left(C_2+\frac{b C_1}{(-1+D)\mu}\right)\right)^\frac{1}{2}>0,\ \ \ \ \ \ \ \ \ \ \ \ \ \ \ \ \ \ \ \ \ \ \
\end{flalign*}
then there exist a pair of conjugate imaginary roots. According to the Shengjin formula \cite{sj89} of the cubic equation, this pair of conjugate imaginary solutions can be calculated to be
\beq
\begin{aligned}\nonumber
t_{1,2}&=\frac{1}{6}\left(\sqrt[3]{\frac{3}{2}\left(9C_1+\sqrt{81C_1^2+12B_1^3}\right)}+\sqrt[3]{\frac{3}{2}\left(9C_1-\sqrt{81C_1^2+12B_1^3}\right)}\right)\\
&\pm\frac{\sqrt{3}}{6}\left(\sqrt[3]{\frac{3}{2}\left(9C_1+\sqrt{81C_1^2+12B_1^3}\right)}-\sqrt[3]{\frac{3}{2}\left(9C_1-\sqrt{81C_1^2+12B_1^3}\right)}\right)i.
\end{aligned}
\eeq
Denote
\beqn
\begin{aligned}
n_r:&=\frac{1}{6}\left(\sqrt[3]{\frac{3}{2}\left(9C_1+\sqrt{81C_1^2+12B_1^3}\right)}+\sqrt[3]{\frac{3}{2}\left(9C_1-\sqrt{81C_1^2+12B_1^3}\right)}\right), \\ n_i:&=\frac{\sqrt{3}}{6}\left(\sqrt[3]{\frac{3}{2}\left(9C_1+\sqrt{81C_1^2+12B_1^3}\right)}-\sqrt[3]{\frac{3}{2}\left(9C_1-\sqrt{81C_1^2+12B_1^3}\right)}\right),
\end{aligned}
\eeqn
then setting $\mu=n_r^2-n_i^2$ can give $\lambda=\pm2n_rn_ii$. This equals to the condition
\begin{flalign*}
&(A5)\ 9\mu =2\sqrt[3]{\frac{3}{2}\left(\frac{-9\alpha b G_{15}\left(C_2+\frac{b C_1}{(-1+D)\mu}\right)}{1+\sqrt{D}}+\sqrt{\Delta}\right)}\sqrt[3]{\frac{3}{2}\left(\frac{-9\alpha b G_{15}\left(C_2+\frac{b C_1}{(-1+D)\mu}\right)}{1+\sqrt{D}}-\sqrt{\Delta}\right)}& \\
&\ -\frac{1}{2}\sqrt[3]{\left(\frac{-27\alpha b G_{15}\left(C_2+\frac{b C_1}{(-1+D)\mu}\right)}{2+2\sqrt{D}}+\frac{3}{2}\sqrt{\Delta}\right)^2}-\frac{1}{2}\sqrt[3]{\left(\frac{-27\alpha b G_{15}\left(C_2+\frac{b C_1}{(-1+D)\mu}\right)}{2+2\sqrt{D}}-\frac{3}{2}\sqrt{\Delta}\right)^2}
\end{flalign*}
\begin{flalign*}
&\  admits\  positive\  roots\  \mu,\ where &\\
&\ \Delta=81\left(\left(C_2+\frac{b C_1}{(-1+D)\mu}\right)\frac{\alpha b G_{15} }{1+\sqrt{D}}\right)^2-12\left(\left(C_2+\frac{b C_1}{(-1+D)\mu}\right)\frac{\alpha \beta G_{22}G_{15}}{2\sqrt{D}}\right)^3.
\end{flalign*}
As a result, there is
\begin{theorem}\label{th3}
  Assume that (A1), (A2), (A3), (A4) and (A5) hold, then there exists a Hopf bifurcation for system (\ref{orginal-nonlinear-3}), where we denote this bifurcation parameter value by $\hat{\mu}$.
\end{theorem}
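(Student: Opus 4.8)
The plan is to establish the three standard ingredients of a Hopf bifurcation at $\hat\mu$: (i) a smooth branch of equilibria (the pinned pulse) through $\hat\mu$, (ii) a pair of simple, nonzero purely imaginary eigenvalues of the linearization at $\mu=\hat\mu$ with the rest of the spectrum bounded away from the imaginary axis, and (iii) transversality of the crossing. Ingredient (i) is supplied by (A1): the strict discriminant condition $\Delta>0$ guarantees that the existence equations for system (\ref{orginal-nonlinear-3}) admit a non-degenerate pair $(C_1,C_2)$, hence a pinned pulse $\Gamma_p(\mu)$ of the form (\ref{Dno11}); since the Jacobian of those equations is nonsingular at a non-degenerate root, the implicit function theorem makes $(C_1(\mu),C_2(\mu))$, and therefore the cubic coefficients $B_1(\mu),E_1(\mu)$, smooth in $\mu$ near $\hat\mu$.

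For (ii) I would start from the reduced eigenvalue relation $H_1(t)=t^3+B_1t+E_1=0$ with $t=\sqrt{\mu+\lambda}$ and the admissibility constraint $\mathrm{Re}\,t>0$, which encodes decay of the eigenfunction (\ref{orginal-nonlinear-2}) at infinity. Conditions (A2)--(A4) are exactly the sign requirements $E_1>0$, $B_1<0$, and $H_1(\sqrt{-B_1/3})>0$ that push the local minimum of $H_1$ above the axis, so that $H_1$ has one real root $t_3$ and a complex-conjugate pair $t_{1,2}=n_r\pm n_i i$. Since the product of the three roots equals $-E_1<0$ while $t_1t_2=n_r^2+n_i^2>0$, the real root obeys $t_3<0$ and is inadmissible; only the pair $t_{1,2}$ (with $n_r>0$) contributes, giving $\lambda=\left(n_r^2-n_i^2-\mu\right)\pm 2n_rn_i\,i$. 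Condition (A5), namely $\mu=n_r^2-n_i^2$, then selects the positive value $\hat\mu$ at which $\mathrm{Re}\,\lambda=0$ and $\lambda=\pm 2n_rn_i\,i\neq 0$. Because $\sigma_{ess}=(-\infty,-\hat\mu]$ lies strictly in the left half-plane and $t_3$ is inadmissible, this conjugate pair is the entire spectrum on the imaginary axis, and each admissible $t$ yields a one-dimensional eigenspace, so the pair is simple; thus Hypothesis \ref{hypo2} holds with $\sigma_0=\{\pm 2n_rn_i\,i\}$.

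For (iii) I would differentiate $\mathrm{Re}\,\lambda(\mu)=n_r(\mu)^2-n_i(\mu)^2-\mu$ and require $\frac{d}{d\mu}\mathrm{Re}\,\lambda|_{\hat\mu}\neq 0$, equivalently $\frac{d}{d\mu}\!\left(n_r^2-n_i^2\right)\neq 1$. This is the step I expect to be the main obstacle, since $n_r,n_i$ are only implicitly defined through the cubic whose coefficients $B_1,E_1$ themselves depend on $\mu$ via the coupled existence equations for $(C_1,C_2)$. The cleanest route is to note that the strict inequality in (A4) keeps $t_{1,2}$ genuinely complex and simple, so $H_1'(t_{1,2})=3t_{1,2}^2+B_1\neq 0$; this legitimizes implicit differentiation of $H_1(t(\mu),\mu)=0$ to obtain $\frac{dt}{d\mu}=-\dfrac{(\partial_\mu B_1)\,t+\partial_\mu E_1}{3t^2+B_1}$, where $\partial_\mu B_1,\partial_\mu E_1$ are computed from $\frac{dC_i}{d\mu}$ extracted from the existence equations, and then to read off $\frac{d}{d\mu}\!\left(n_r^2-n_i^2\right)=\mathrm{Re}\big(2t\,\tfrac{dt}{d\mu}\big)$. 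I would then argue generic non-vanishing of the resulting expression, recording it if necessary as an additional non-degeneracy condition in the spirit of (A1)--(A5).

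Finally, with (i)--(iii) in hand I would close the argument in the abstract framework of Section \ref{2}: taking $\mathcal{X}=(L^2)^2$, $\mathcal{Y}=\mathcal{Z}=(H^2)^2$, the operator $L$ and the remainder $R$ of the recentered system meet Hypothesis \ref{hypo1}, the spectral splitting above furnishes Hypothesis \ref{hypo2}, and the sectorial resolvent bound gives Hypothesis \ref{hypo3-1}. Lemma \ref{lemma1} then yields a two-dimensional parameter-dependent center manifold, and Lemma \ref{lemma3} reduces the flow on it to a planar Hopf normal form, confirming the existence of a Hopf bifurcation at $\mu=\hat\mu$.
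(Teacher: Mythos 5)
Your proposal is correct, and its core coincides with the paper's argument: the paper's proof of Theorem \ref{th3} is exactly the computation preceding it, in which the eigenvalue problem is reduced to the cubic $H_1(t)=t^3+B_1t+E_1=0$ with $t=\sqrt{\mu+\lambda}$, conditions (A2)--(A4) are read as the sign conditions $E_1>0$, $B_1<0$, $H_1(\sqrt{-B_1/3})>0$ producing a complex-conjugate pair of roots $n_r\pm in_i$, and (A5) is the consistency requirement that $\mu=n_r^2-n_i^2$ admit a positive root $\hat\mu$, at which $\lambda=\pm 2n_rn_ii$ is purely imaginary. Where you differ is in scope, and the differences are worth recording. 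First, your Vieta argument that $t_3=-2n_r<0$, hence that the real root is inadmissible under $\mathrm{Re}\,t>0$ and that $n_r>0$, is left entirely implicit in the paper; it is genuinely needed, since without $n_r>0$ the complex pair would not correspond to eigenvalues at all. Second, the paper proves no transversality statement in Theorem \ref{th3}: for the paper, ``Hopf bifurcation'' at this stage means only the purely imaginary spectral configuration, and the crossing speed appears only later, as the real part of the normal-form coefficient $a$, computed to be $-1$ in the run-up to Theorem \ref{thm2} (because the parameter enters the equations only through $-\mu U_{1,2}$, so $R_{11}=-1$, and the paper takes $R_{01}=0$, hence $\Psi_{001}=0$). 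Your implicit-differentiation formula, by contrast, tracks the $\mu$-dependence of $B_1,E_1$ through $C_1(\mu),C_2(\mu)$, i.e.\ the motion of the pulse itself --- precisely the contribution that the paper's $a=-1$ computation drops --- so your caution that transversality is a real issue, possibly requiring an extra non-degeneracy condition, is sound and arguably more careful than the paper's treatment. Third, the center-manifold and normal-form machinery you invoke to close the argument is, in the paper, deferred to after Theorem \ref{th3}; it belongs to the subsequent unfolding analysis culminating in Theorem \ref{thm2}, not to the theorem you were asked to prove, so for the statement as written your final step is sufficient but not necessary.
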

Based on this, we proceed to study the existence of breathing pulses. First, we transform the system (\ref{orginal-nonlinear-3}) and get
\beq\label{G22}
\frac{d\tilde{U}}{dt}=L\tilde{U}+R(\tilde{U},\tilde{\mu}),
\eeq
where
\beq\label{G22-1}
\begin{aligned}
\tilde{U}=
\begin{pmatrix}
\tilde{U}_1 \\
\tilde{U}_2
\end{pmatrix},
\ \ R(\tilde{U},\tilde{\mu})=
\begin{pmatrix}
-\tilde{\mu} \tilde{U}_1+\frac{\alpha G_{15}}{\varepsilon^2}I(\frac{x}{\varepsilon^2})\tilde{U}_2^2 \\
-\tilde{\mu} \tilde{U}_2
\end{pmatrix},
\ L=
\begin{pmatrix}
\frac{d^2}{dx^2}-\hat{\mu} & \frac{2\alpha G_{15}}{\varepsilon^2}I(\frac{x}{\varepsilon^2})\Gamma_{2,p} \\
-b+\frac{\beta G_{22}}{\varepsilon^2}I(\frac{x}{\varepsilon^2}) & D\frac{d^2}{dx^2}-\hat{\mu}  \\
\end{pmatrix}.
\end{aligned}
\eeq
 We still use the function space $\mathcal{X}=\left(L^2(\mathbb{R}^2) \right)^2$, $\mathcal{Y}=\mathcal{Z}=\left(H^2(\mathbb{R}^2) \right)^2$, and $L:\mathcal{Z}=\mathcal{Y}\rightarrow\mathcal{X}$. $L$ is a closed operator in $\mathcal{X}$ with domain $\mathcal{Z}$. Since the operator $L$ is real, and the fact that its point spectrum is given as $\lambda=\pm2n_rn_ii$, we know the associated spectral subspace $\mathcal{E}_0$ is two-dimensional spanned by the eigenfunctions ${P,\bar{P}}$. And $P=(P_1,P_2)^T$ satisfies
 \beali\nonumber
0&=\frac{\partial^2 P_1}{\partial x^2}-(n_r^2-n_i^2+2n_rn_ii)P_1+\frac{\alpha}{\varepsilon^2}I(\frac{x}{\varepsilon^2})(2G_{15}\Gamma_{2,p}^2P_2),\\
0&=D\frac{\partial^2 P_2}{\partial x^2}-b P_1-(n_r^2-n_i^2+2n_rn_ii)P_2+\frac{\beta}{\varepsilon^2}I(\frac{x}{\varepsilon^2})(G_{22}P_1).
\enali
According to the parameter-dependent center manifolds Lemma \ref{lemma1}, we have
\beqn
\tilde{U}=U_0+\Psi(U_0,\tilde{\mu}), \ U_0\in\mathcal{E}_0,\ \Psi(U_0,\tilde{\mu})\in\mathcal{Z}_h=\mathcal{Y}_h.
\eeqn
Then, by applying the normal form Lemma \ref{lemma3}, we find $U_0=V_0+\Phi_{\tilde{\mu}}(V_0),\ V_0\in\mathcal{O}_1\subset\mathcal{E}_0$, which transforms equation (\ref{G22}) into the normal form
\beq\label{v-expansion}
\frac{dV_0}{dt}=LV_0+N_{\tilde{\mu}} V_0+\rho(V_0,\tilde{\mu}),\nonumber
\eeq
We therefore have
\beq\label{u-expansion}
\tilde{U}=V_0+\tilde{\Psi}(V_0,\tilde{\mu}),\ \
\tilde{\Psi}(V_0,\tilde{\mu})=\Phi_{\tilde{\mu}}(V_0)+\Psi(V_0+\Phi_{\tilde{\mu}}(V_0),\tilde{\mu}).
\eeq
Since $V_0(t)\in\mathcal{E}_0$, it is convenient to write $V_0(t)=A(t)P+\olsi{A(t)P}, \ A(t)\in \mathbb{C}$, then equation (\ref{v-expansion}) can be transformed into an “amplitude equation" $\frac{dA}{dt}=i2n_rn_iA+N_{\tilde{\mu}}(A,\bar{A})+\rho(A,\bar{A},\tilde{\mu})$, which can be simplified further by Lemma \ref{lemma4} to be
\beq\label{A-expansion}
\frac{dA}{dt}=i2n_rn_iA+AQ(|A|^2,\tilde{\mu})+\rho(A,\bar{A},\tilde{\mu}).
\eeq
Here, $Q(|A|^2,\tilde{\mu})=a\tilde{\mu}+b|A|^2+O((|\tilde{\mu}|+|A|^2)^2)$. Hence, equation (\ref{A-expansion}) equals to
\beqn
\frac{dA}{dt}=i2n_rn_iA+a\tilde{\mu} A+b|A|^2A+O(|A|(|\tilde{\mu}|+|A|^2)^2).
\eeqn
We neglect the higher order terms $\rho$, which has no impact on our final analysis. Then, by introducing polar coordinates  $A=re^{i\phi}$,
we obtain
\beqn
\frac{dr}{dt}+ir\frac{d\phi}{dt}=i2n_rn_ir+rQ(r^2,\tilde{\mu}),
\eeqn
whose real and imaginary parts satisfy
\beq\label{real-ima}
\begin{aligned}
\frac{dr}{dt}&=a_r\tilde{\mu} r+b_rr^3+O(r(|\tilde{\mu}|+r^2)^2),\\
\frac{d\phi}{dt}&=2n_rn_i+a_i\tilde{\mu}+b_ir^2+O((|\tilde{\mu}|+r^2)^2),
\end{aligned}
\eeq
in which the real coefficients $a_r$ and $b_r$ represent the real parts of $a$ and $b$, whereas $a_i$ and $b_i$ represent the imaginary parts of $a$ and $b$. The key property of system (\ref{real-ima}) is that the radial equation for $r$ decouples, which corresponds to a pitchfork bifurcation occurs at $\tilde{\mu}=0$. If the bifurcation here is supercritical, then there exists an oscillating solution near $\Gamma_p$, which is called a breathing pulse. Therefore, figuring out the values of coefficients $a$ and $b$ should be the next step. For this, differentiate (\ref{u-expansion}) with respect to $t$ and replace $\frac{dU}{dt}$ and $\frac{dV_0}{dt}$ respectively. There is
\beq\label{identity}
D_{V_0}\tilde{\Psi}(V_0,\tilde{\mu})LV_0-L\tilde{\Psi}(V_0,\tilde{\mu})+N_{\tilde{\mu}} V_0=Q(V_0,\tilde{\mu}),
\eeq
where
\beqn
Q(V_0,\tilde{\mu})=\Pi_p\left(R(V_0+\tilde{\Psi}(V_0,\tilde{\mu}),\tilde{\mu})-D_{V_0}\tilde{\Psi}(V_0,\tilde{\mu})N_{\tilde{\mu} } V_0\right).
\eeqn
Here $\Pi_p$ represents the linear map that associates to a map of class $\mathcal{C}^p$ the polynomial of degree $p$ in its Taylor expansion. We then write the Taylor expansion of $R$ and $\tilde{\Psi}$ as follows:
\beqn
\begin{aligned}
R(\tilde{U},\tilde{\mu})&=\sum_{1\leq q+l\leq p}R_{ql}(\tilde{U}^{(q)},\tilde{\mu}^{(l)})+o((\|\tilde{U}\|+\|\tilde{\mu}\|)^p),\\
\tilde{\Psi}(V_0,\tilde{\mu})&=\sum_{1\leq q+l\leq p}\tilde{\Psi}_{ql}(V_0^{(q)},\tilde{\mu}^{(l)})+o((\|V_0\|+\|\tilde{\mu}\|)^p),
\end{aligned}
\eeqn
with
\beqn
R_{ql}(\tilde{U}^{(q)},\tilde{\mu}^{(l)})=\tilde{U}^q\tilde{\mu}^lR_{ql},\
\tilde{\Psi}_{ql}(V_0^{(q)},\tilde{\mu}^{(l)})=\tilde{\mu}^l\sum_{q_1+q_2=q}A^{q_1}\bar{A}^{q_2}\Psi_{q_1q_2l}.
\eeqn
By identifying in (\ref{identity}) the terms of order $O(\mu)$, $O(A^2)$, and $O(A\bar{A})$, we obtain
\beqn
\begin{aligned}
-L\Psi_{001}&=R_{01},\\
(i4n_rn_ii-L)\Psi_{200}&=R_{20}(P,P),\\
-L\Psi_{110}&=2 R_{20}(P,\bar{P}).
\end{aligned}
\eeqn
Since $\sigma_{pt}=\{\pm2n_rn_ii\}$, the operators $L$ and $(i4n_rn_ii-L)$ on the left sides are invertible, so that $\Psi_{001}$, $\Psi_{200}$ and $\Psi_{110}$ can be uniquely determined. Similarly, we identify the terms of order $O(\mu A)$ and $O(A^2\bar{A})$ to find
\begin{align}
(L- i2n_rn_i)\Psi_{101}&=aP-R_{11}(P)-2R_{20}(P,\Psi_{001}),\label{right}\\
(L- i2n_rn_i)\Psi_{210}&=bP-2R_{20}(P,\Psi_{110})-2R_{20}(\bar{P},\Psi_{200})-3R_{30}(P,P,\bar{P}).\label{right-1}
\end{align}
We already known that $i2n_rn_i$ is a simple isolated eigenvalue of $L$, the range of $(L- i2n_rn_i)$ is of codimension 1, therefore we can solve the above two equations if and only if the right-hand sides satisfy one solvability condition. According to the Fredholm alternative theorem (P.28 of \cite{K13}), it demands that the right-hand sides of (\ref{right}) and (\ref{right-1}) are orthogonal to the kernel of the adjoint operator $(L^*+ i2n_rn_i)$. It is easily to find that the kernel of the adjoint operator $(L^*+ i2n_rn_i)$ is one-dimensional just as the kernel of $(L- i2n_rn_i)$. Moreover, $L^*$ is real. So we introduce the eigenfunction $P^*$ by $L^*P^*=-i2n_rn_iP^*$, which gives
\beq\label{a-b}
\begin{aligned}
a&=\frac{\langle R_{11}(P)+2R_{20}(P,\Psi_{001}),P^*\rangle}{\langle P, P^*\rangle},\\
b&=\frac{\langle 2R_{20}(P,\Psi_{110})+2R_{20}(\bar{P},\Psi_{200})+3R_{30}(P,P,\bar{P}),P^*\rangle}{\langle P, P^*\rangle}.\nonumber
\end{aligned}
\eeq
According to (\ref{G22-1}), $R_{01}=0,\ R_{11}=-1,\ R_{20}(U,V)=\begin{pmatrix}
                                \frac{\alpha G_{15}}{\varepsilon^2}I(\frac{x}{\varepsilon^2})U_2V_2 \\
                                0
                              \end{pmatrix},\ R_{30}=0$,
then
\beq
\begin{aligned}\label{a,b}\nonumber
a&=\frac{\langle -P, P^*\rangle}{\langle P, P^*\rangle}=-1,\\
b&=\frac{\int_{\mathbb{R}}^{\ }\left(4|P_{2}|^2(\frac{\alpha G_{15}}{\varepsilon^2})^2I(\frac{x}{\varepsilon^2})\overline{P_{2}}(-L)^{-1}_{11}I(\frac{x}{\varepsilon^2})+2(\frac{\alpha G_{15}}{\varepsilon^2})^2I(\frac{x}{\varepsilon^2})\overline{P_{2}}(2i\omega_H-L)^{-1}_{11}I(\frac{x}{\varepsilon^2})P_{2}^2\right)\overline{P_1^*}dx}{\langle P, P^*\rangle},
\end{aligned}
\eeq
where subscript $\ _{11}$ represents the location in this operator matrix  and $P^*$ satisfies $(L^*+i2n_rn_i)P^*=0$.
Similarly, $P^*$ can be solved by using the method as before. And
\beq\label{p*}
P_1^*(x)=\begin{cases}
C_5e^{\sqrt{\hat{\mu}-i 2n_rn_i}x}+\frac{b C_6 e^{\sqrt{\frac{\hat{\mu}-i 2n_rn_i}{D}}x}}{(-1+\frac{1}{D})(\hat{\mu}-i 2n_rn_i)},  & x \in I_s^-,\\
C_5+\frac{b C_6}{(-1+\frac{1}{D})(\hat{\mu}-i 2n_rn_i)}, & x \in I_f,\\
C_5e^{-\sqrt{\hat{\mu}-i 2n_rn_i}x}+\frac{b C_6 e^{-\sqrt{\frac{\hat{\mu}-i 2n_rn_i}{D}}x}}{(-1+\frac{1}{D})(\hat{\mu}-i 2n_rn_i)}, & x \in I_s^+,
\end{cases}
\ P_2^*(x)=\begin{cases}\nonumber
C_6e^{\sqrt{\frac{\hat{\mu}-i 2n_rn_i}{D}}x}, & x \in I_s^-,\\
C_6, & x \in I_f,\\
C_6e^{-\sqrt{\frac{\hat{\mu}-i 2n_rn_i}{D}}x}, & x \in I_s^+.
\end{cases}
\eeq
Here, $C_5$ and $C_6$ are likewise required to satisfy existence constraints of the solution.
Then we divide the calculation of $b$ into six parts as
$
b=\frac{I_1+I_2+I_3}{L_1+L_2+L_3},
$
where
\beq
\begin{aligned}
I_1&=\int_{-\infty}^{-\varepsilon}\left(4|P_{2}|^2(\frac{\alpha G_{15}}{\varepsilon^2})^2I(\frac{x}{\varepsilon^2})\overline{P_{2}}(-L)^{-1}_{11}I(\frac{x}{\varepsilon^2})+2(\frac{\alpha G_{15}}{\varepsilon^2})^2I(\frac{x}{\varepsilon^2})\overline{P_{2}}(i4n_rn_i-L)^{-1}_{11}I(\frac{x}{\varepsilon^2})P_{2}^2\right)\overline{P_1^*}dx;\\
I_2&=\int_{-\varepsilon}^{+\varepsilon}\left(4|P_{2}|^2(\frac{\alpha G_{15}}{\varepsilon^2})^2I(\frac{x}{\varepsilon^2})\overline{P_{2}}(-L)^{-1}_{11}I(\frac{x}{\varepsilon^2})+2(\frac{\alpha G_{15}}{\varepsilon^2})^2I(\frac{x}{\varepsilon^2})\overline{P_{2}}(i4n_rn_i-L)^{-1}_{11}I(\frac{x}{\varepsilon^2})P_{2}^2\right)\overline{P_1^*}dx;\\
I_3&=\int_{+\varepsilon}^{+\infty}\left(4|P_{2}|^2(\frac{\alpha G_{15}}{\varepsilon^2})^2I(\frac{x}{\varepsilon^2})\overline{P_{2}}(-L)^{-1}_{11}I(\frac{x}{\varepsilon^2})+2(\frac{\alpha G_{15}}{\varepsilon^2})^2I(\frac{x}{\varepsilon^2})\overline{P_{2}}(i4n_rn_i-L)^{-1}_{11}I(\frac{x}{\varepsilon^2})P_{2}^2\right)\overline{P_1^*}dx;\nonumber\\
L_1&=\int_{-\infty}^{-\varepsilon}P_{1}(x)\overline{P_1^*}(x)+P_{2}(x)\overline{P_2^*}(x)dx,\\
L_2&=\int_{-\varepsilon}^{+\varepsilon}P_{1}(x)\overline{P_1^*}(x)+P_{2}(x)\overline{P_2^*}(x)dx,\\
L_3&=\int_{+\varepsilon}^{+\infty}P_{1}(x)\overline{P_1^*}(x)+P_{2}(x)\overline{P_2^*}(x)dx.\nonumber
\end{aligned}
\eeq
For $I_1$, the operator $(-L)^{-1}_{11}$ and $(2i\omega_H-L)^{-1}_{11}$ are bounded, the function $P$ and $P^*$ are bounded, $I(\frac{x}{\varepsilon^2})$ is exponentially small. Hence, $I_1=0$ to leading order. Likewise, we also have $I_3=0$ to leading order. For $I_2$, something different happens. $I(\frac{x}{\varepsilon^2})$ is not exponentially small. 
 However, to leading order, $P_{2}=C_4+\frac{b C_3}{(-1+D)(\hat{\mu}+i2n_rn_i)}$ and $\overline{P_1^*}=C_5+\frac{b C_6}{(-1+\frac{1}{D})(\hat{\mu}-i 2n_rn_i)}$, which facilitates our calculation. For $L_2$, $P$ and $P^*$ are constants, so we have $L_2=0$ to leading order. For $L_1$ and $L_3$, since the integration interval is symmetric and the integration function is an even function, we get $L_1=L_3$ to leading order. In summary, $b=\frac{I_2}{2L_1}$.
\begin{theorem}\label{thm2}
  Assume that the real part of $b$ is not equal to 0, then for the system (\ref{orginal-nonlinear-3}) a supercritical (resp. subcritical) Hopf bifurcation occurs at $\mu=\hat{\mu}$ when $b_r<0$ (resp., $b_r>0$). More precisely, for $\mu$ sufficiently close to $\hat{\mu}$:
  \begin{enumerate}
\item[(i)] If $b_r>0$ (resp., $b_r<0$), the differential equation has precisely one equilibrium $U(\mu)$ for $\mu<\hat{\mu}$ (resp., $\mu>\hat{\mu}$). This equilibrium is stable when $b_r<0$ and unstable when $b_r>0$;
\item[(ii)] If $b_r>0$ (resp., $b_r<0$), the differential equation possesses for  $\mu>\hat{\mu}$ (resp., $\mu<\hat{\mu}$) an equilibrium $U(\mu)$ and a unique periodic orbit $U^*(\mu)=O(|\mu-\hat{\mu}|^\frac{1}{2})$ which surrounds this equilibrium. The periodic orbit is stable when $b_r<0$ and unstable when $b_r>0$. Moreover, there exists a stable oscillating solution $\Gamma_{osc}$ near $\Gamma_p$, which is given by
    \beqn
    \Gamma_{osc}=\Gamma_{p}+\sqrt{\bigg|\frac{\mu-\hat{\mu}}{b_r}\bigg|}e^{i2n_rn_it}P_{\hat{\mu}}+c.c.+O(|\mu|).
    \eeqn
\end{enumerate}
\end{theorem}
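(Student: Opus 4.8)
The plan is to read Theorem~\ref{thm2} as the infinite-dimensional analogue of the classical planar Hopf bifurcation theorem and to extract it entirely from the reduced amplitude equation (\ref{A-expansion}) and its polar form (\ref{real-ima}). The reduction machinery — the center manifold (Lemma~\ref{lemma1}), the normal form transformation (Lemma~\ref{lemma3}) and its simplification by Lemma~\ref{lemma4}, together with the explicit coefficients $a=-1$ and $b$ — is already in place, so the remaining work is to analyze a single scalar radial equation and then lift its phase portrait back to (\ref{orginal-nonlinear-3}).

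First I would isolate the decoupled radial equation. Setting $a_r=-1$ in (\ref{real-ima}) gives $\dot r=-\tilde\mu\,r+b_r r^3+O(r(|\tilde\mu|+r^2)^2)$. To leading order its nonnegative zeros are $r=0$, corresponding to the pinned pulse $\Gamma_p(\mu)$, and $r_\ast=\sqrt{\tilde\mu/b_r}$, which is real exactly when $\tilde\mu$ and $b_r$ share a sign. This sign relation fixes the side of the bifurcation: for $b_r>0$ the nontrivial branch appears for $\mu>\hat\mu$, and for $b_r<0$ it appears for $\mu<\hat\mu$, matching (i) and (ii). Linearizing the radial equation then gives the eigenvalue $-\tilde\mu$ at $r=0$ and $-\tilde\mu+3b_r r_\ast^2=2\tilde\mu$ at $r=r_\ast$; reading off signs reproduces the stated dichotomy, namely that $\Gamma_p$ is stable for $\mu>\hat\mu$ and unstable for $\mu<\hat\mu$ (which specializes to (i)), while the bifurcated branch $r_\ast=O(|\mu-\hat\mu|^{1/2})$ is stable when $b_r<0$ and unstable when $b_r>0$.

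Because the angular equation reads $\dot\phi=2n_rn_i+O(|\tilde\mu|+r^2)$, a fixed point $r_\ast$ of the radial flow is a circle traversed at frequency $\approx 2n_rn_i$, i.e., a periodic orbit of the amplitude equation. Writing $V_0=AP+\overline{AP}$ with $A=r_\ast e^{i2n_rn_i t}$ and pushing this through $\tilde U=V_0+\tilde\Psi(V_0,\tilde\mu)$ from (\ref{u-expansion}) yields $\Gamma_{osc}=\Gamma_p+\sqrt{|(\mu-\hat\mu)/b_r|}\,e^{i2n_rn_i t}P_{\hat\mu}+c.c.+O(|\mu|)$, which is the breathing pulse in the case $b_r<0$. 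The transversality required for the Hopf theorem is supplied by $a_r=-1\neq0$ (the critical pair $\pm i2n_rn_i$ crosses the imaginary axis with nonzero speed), and the nondegeneracy by the standing assumption $b_r\neq0$.

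The step that demands genuine care — and that I expect to be the main obstacle — is promoting this truncated reduced picture to rigorous conclusions for the full system. Two points must be settled. First, the remainder $\rho=o(r^3)$ must not destroy the branches: near $\tilde\mu=0$ an implicit function theorem argument locates the true equilibrium $r_\ast(\tilde\mu)$ of the untruncated radial equation and confirms that it remains hyperbolic, so the leading-order signs persist. Second, the stability found in the two-dimensional reduced flow must transfer to the infinite-dimensional problem; this is precisely where the emptiness of the unstable spectrum enters, since Lemma~\ref{lemma2} then renders the center manifold locally attracting and certifies that the reduced stability is the genuine stability of $\Gamma_p$ and of the periodic orbit. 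Consequently the technical prerequisites — verifying $\sigma_+=\emptyset$ at $\hat\mu$, so that only $\pm i2n_rn_i$ lies on the imaginary axis with the rest of $\sigma$ in $\sigma_-$, and checking the resolvent bound of Hypothesis~\ref{hypo3-1} — are what make the reduction, and hence Theorem~\ref{thm2}, legitimate.
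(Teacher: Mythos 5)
Your proposal is correct and follows essentially the same route as the paper's proof: substitute $a_r=-1$ into the polar normal form (\ref{real-ima}), read off the trivial and nontrivial equilibria $r=0$ and $r^*=O(|\tilde{\mu}|^{\frac{1}{2}})$ of the decoupled radial equation together with their stability (linearization giving $-\tilde{\mu}$ and $2\tilde{\mu}$), and lift back through $\tilde{U}=V_0+\tilde{\Psi}(V_0,\tilde{\mu})$ and the translation by $\Gamma_p$ to obtain $\Gamma_{osc}$. The only difference is that you make explicit two points the paper handles implicitly or elsewhere — persistence of the branches under the $o(r^3)$ remainder and the transfer of reduced stability to the full system via the attracting center manifold of Lemma \ref{lemma2} (cf. Remark \ref{thm1-1}) — which strengthens rather than alters the argument.
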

\begin{proof}
The weakly nonlinear stability of the pinned  pulse solution $\Gamma_p$ near a Hopf bifurcation is determined by
\beq
\begin{aligned}
\frac{dr}{dt}&=-\tilde{\mu} r+b_rr^3+O(r(|\tilde{\mu}|+r^2)^2),\\
\frac{d\phi}{dt}&=2n_rn_i+O(|\tilde{\mu}|+r^2),\nonumber
\end{aligned}
\eeq
where $O$-terms originate from the nonlinearity of $R$. The trivial equilibrium $r=0$ of the first radial equation is stable when $\tilde{\mu} >0$ and unstable when $\tilde{\mu}<0$. Moreover, the radial equation has a nontrivial leading order equilibrium if and only if $r^2=\frac{\tilde{\mu}}{b_r}$ exists a positive solution $r^*=O(|\tilde{\mu}|^\frac{1}{2})$. $r^*$ has opposite stability to $r=0$, i.e., $r^*$ is stable if $\tilde{\mu}<0$, while $r^*$ is unstable when $\tilde{\mu}>0$. Moreover, according to (\ref{u-expansion}), $U=A(t)P+\olsi{A(t)P}+\tilde{\Psi}(A(t)P+\olsi{A(t)P},\tilde{\mu})$, which yields the expansion expression for $\Gamma_{osc}$ as
\beqn
   \Gamma_{osc}=\sqrt{\bigg|\frac{\tilde{\mu}}{b_r}\bigg|}e^{i\omega_Ht}P+c.c.+O(|\tilde{\mu}|).
\eeqn
Back to the original system, i.e, reversing the transformation $\left\{\begin{aligned}
                                                                                        \tilde{U}_1&=U_1-\Gamma_{1,p}\\   \tilde{U}_2&=U_2-\Gamma_{2,p}\\  \tilde{\mu}&=\mu-\hat{\mu}
                                                                                         \end{aligned}
                                                                                           \right. $,
we come to the conclusion.
\end{proof}
\begin{remark}
  Similar to Remark \ref{rem1}, when $D=1$, the expressions need to be modified. Not only this,  $P^*_1$ needs to be rewritten as
\beq
P_1^*(x)=\begin{cases}
C_5e^{\sqrt{\hat{\mu}-i 2n_rn_i}x}+\frac{b C_6}{4(\hat{\mu}-i 2n_rn_i)}e^{\sqrt{\hat{\mu}-i 2n_rn_i}x}(-1+2\sqrt{\hat{\mu}-i 2n_rn_i}x),  & x \in I_s^-,\\
C_5-\frac{b C_6}{4(\hat{\mu}-i 2n_rn_i)}, & x \in I_f,\\
C_5e^{-\sqrt{\hat{\mu}-i 2n_rn_i}x}+\frac{b C_6}{4(\hat{\mu}-i 2n_rn_i)}e^{-\sqrt{\hat{\mu}-i 2n_rn_i}x}(-1-2\sqrt{\hat{\mu}-i 2n_rn_i}x), & x \in I_s^+,
\end{cases}
\eeq
which leads to the change of all calculations.
\end{remark}
\subsubsection{$G_2=G_{23}U_2$}
 When $G_2$ is simple linear in $U_2$, i.e., $G_2=G_{23}U_2$, the system admits the pinned pulse solution, provided that
 \begin{flalign*}
&(A6)\   \Delta=4\mu-4\alpha^2G_{11}G_{15}\left(\frac{2b}{(-1+D)\sqrt{\mu}}\frac{\sqrt{D}-D}{2\sqrt{\mu D}-\beta G_{23}}\right)^2>0.&
\end{flalign*}
Correspondingly, we can obtain a univariate cubic equation about $\sqrt{\mu+\lambda}$ analogous to (\ref{cubic}). In order to ensure the existence of Hopf bifurcation, we impose the following assumptions:
\begin{flalign*}
& (A7)\ \Delta>0; &\\
& (A8)\ \beta G_{23}-\sqrt[3]{-(\beta G_{23})^3+3\sqrt{D}(-B+\sqrt{\Delta})}-\sqrt[3]{-(\beta G_{23})^3+3\sqrt{D}(-B-\sqrt{\Delta})}\leq0;\\
& (A9)\ 36D \mu =-\frac{1}{2}\sqrt[3]{\left(-(\beta G_{23})^3+3\sqrt{D}(-B+\sqrt{\Delta})\right)^2}
-\frac{1}{2}\sqrt[3]{\left(-(\beta G_{23})^3+3\sqrt{D}(-B-\sqrt{\Delta})\right)^2}\\
&+(\beta G_{23})^2+\beta G_{23}\left(\sqrt[3]{-(\beta G_{23})^3+3\sqrt{D}(-B+\sqrt{\Delta})}+\sqrt[3]{-(\beta G_{23})^3+3\sqrt{D}(-B-\sqrt{\Delta})}\right)\\
&+2\sqrt[3]{-(\beta G_{23})^3+3\sqrt{D}(-B+\sqrt{\Delta})}\sqrt[3]{-(\beta G_{23})^3+3\sqrt{D}(-B-\sqrt{\Delta})}\  admits\  positive\  roots\  \mu \ with\\
& \Delta=\left(\frac{-2\alpha b \sqrt{D}G_{15}(C_2+\frac{bC_1}{(-1+D)\mu})}{1+\sqrt{D}}\right)\left(324D\left(\frac{-2\alpha b \sqrt{D}G_{15}(C_2+\frac{bC_1}{(-1+D)\mu})}{1+\sqrt{D}}\right)+12(\beta G_{23})^3\right).
\end{flalign*}
\begin{theorem}\label{thm4}
  Assume that (A6), (A7), (A8) and (A9) hold, then the system considered here exists Hopf bifurcation when $\mu=\hat{\mu}$.
\end{theorem}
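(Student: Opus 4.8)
The plan is to mirror the proof of \thmref{th3}, the case $G_2=G_{22}U_1$ being structurally identical to the present case $G_2=G_{23}U_2$. First I insert $G_1=G_{11}+G_{15}U_2^2$ and $G_2=G_{23}U_2$ into the existence system (\ref{existence}). This reduces to a quadratic system for $(C_1,C_2)$ whose non-degenerate solvability is exactly the discriminant inequality (A6); hence (A6) produces a stationary pinned pulse $\Gamma_p$ of the form (\ref{Dno11}), with interior amplitude $M:=C_2+\frac{bC_1}{(-1+D)\mu}$ read off from these relations. Since $\sigma_{ess}=(-\infty,-\mu]$ is unchanged, any destabilization must come from the point spectrum.

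Next I reduce the eigenvalue problem (\ref{stability}) to a single scalar equation. With $\frac{\partial G_1}{\partial U_1}=0$, $\frac{\partial G_1}{\partial U_2}=2G_{15}U_2$, $\frac{\partial G_2}{\partial U_1}=0$ and $\frac{\partial G_2}{\partial U_2}=G_{23}$, the first relation in (\ref{stability}) gives $C_3$ in terms of $N:=C_4+\frac{bC_3}{(-1+D)(\mu+\lambda)}$; substituting into the second and cancelling the common factor $N$ (nonzero for a nontrivial eigenfunction) eliminates $C_3,C_4$. Writing $t:=\sqrt{\mu+\lambda}$ with $\operatorname{Re}t>0$, I obtain the cubic
\beqn
t^3-\frac{\beta G_{23}}{2\sqrt{D}}\,t^2+\frac{\alpha b G_{15}M}{\sqrt{D}+1}=0,
\eeqn
the counterpart of (\ref{cubic}). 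In contrast to the $G_2=G_{22}U_1$ case, the linear coefficient here vanishes while a quadratic term survives, so I first depress it via $t\mapsto t+\frac{\beta G_{23}}{6\sqrt{D}}$ and then apply the Shengjin formula \cite{sj89}.

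The core of the argument is to realize a pair of purely imaginary eigenvalues. Through $t^2=\mu+\lambda$, an eigenvalue $\lambda=\pm i\omega$ with $\omega\neq0$ corresponds to a complex root $t=n_r+in_i$ with $n_r>0$ satisfying $\mu=n_r^2-n_i^2$ and $\lambda=\pm2in_rn_i$. I would therefore write out the radical expressions for the conjugate pair of roots of the depressed cubic, read off $n_r$ and $n_i$, and impose the self-consistency $\mu=n_r^2-n_i^2$. Conditions (A7) and (A8) together guarantee that the relevant root $t$ is genuinely complex ($n_i\neq0$) with $n_r>0$ rather than real, while carrying out the substitution $\mu=n_r^2-n_i^2$ produces precisely the implicit relation (A9).

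The step I expect to be hardest is the self-consistency encoded in (A9): the coefficients of the cubic already depend on $\mu$ — through the amplitude $M$ and through $\Delta$ — while $\mu$ is simultaneously constrained by $\mu=n_r^2-n_i^2$, so one must show this transcendental equation actually admits a positive root, which is what (A9) asserts. Having fixed such a $\hat\mu$, I would close the argument by verifying transversality, $\frac{d}{d\mu}\operatorname{Re}\lambda\neq0$ at $\mu=\hat\mu$, so that the conjugate pair crosses the imaginary axis with nonzero speed; combined with the simplicity of the crossing eigenvalue and the fixed stable essential spectrum, this certifies that $\mu=\hat\mu$ is a genuine Hopf bifurcation for the system with $G_1=G_{11}+G_{15}U_2^2$ and $G_2=G_{23}U_2$.
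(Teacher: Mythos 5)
Your proposal is correct and follows essentially the same route as the paper: the paper proves this theorem by direct analogy with Theorem \ref{th3}, reducing the eigenvalue problem to a univariate cubic in $t=\sqrt{\mu+\lambda}$ (your cubic $t^3-\frac{\beta G_{23}}{2\sqrt{D}}t^2+\frac{\alpha b G_{15}M}{\sqrt{D}+1}=0$, with vanishing linear term and surviving quadratic term, is exactly the analogue of (\ref{cubic}) referred to there), then using the Shengjin formula so that (A7)--(A8) yield a genuinely complex admissible root $n_r+in_i$ and (A9) enforces the self-consistency $\mu=n_r^2-n_i^2$, giving $\lambda=\pm 2in_rn_i$. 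The only differences are cosmetic or additive: you depress the cubic before applying the Shengjin formula whereas the paper applies it to the undepressed cubic, and your closing transversality check is extra rigor the paper itself never carries out.
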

Now, there are
\beq
\begin{aligned}
\ \ R(\tilde{U},\tilde{\mu})=
\begin{pmatrix}
-\tilde{\mu} \tilde{U}_1+\frac{\alpha G_{15}}{\varepsilon^2}I(\frac{x}{\varepsilon^2})\tilde{U}_2^2 \\
-\tilde{\mu} \tilde{U}_2
\end{pmatrix},
\ L=
\begin{pmatrix}
\frac{d^2}{dx^2}-\hat{\mu} & \frac{2\alpha G_{15}}{\varepsilon^2}I(\frac{x}{\varepsilon^2})\Gamma_{2,p} \\
-b & D\frac{d^2}{dx^2}-\hat{\mu}+\frac{\beta G_{23}}{\varepsilon^2}I(\frac{x}{\varepsilon^2})  \\
\end{pmatrix}.
\end{aligned}\nonumber
\eeq
Hence, $a$ still equals to -1 and the calculation of $b$ can be simplified to the same expression $b=\frac{I_2}{2L_1}$.
\begin{theorem}\label{thm5}
  Assume that the real part of $b$ is not equal to 0, then we get the same conclusions as Theorem \ref{thm2} with $\hat{\mu}$ satisfies Theorem \ref{thm4}.
\end{theorem}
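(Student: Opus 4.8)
The plan is to run the center manifold plus normal form reduction used for Theorem~\ref{thm2} essentially verbatim, exploiting the single structural fact that the nonlinear forcing $R$ of the system with $G_2=G_{23}U_2$ is \emph{identical} to that of system~(\ref{orginal-nonlinear-3}). Indeed, the only difference between the two cases lies in the linear operator $L$: for $G_2=G_{23}U_2$ the $(2,1)$-entry is the plain $-b$ and the $(2,2)$-entry carries the extra defect term $\frac{\beta G_{23}}{\varepsilon^2}I(\frac{x}{\varepsilon^2})$, whereas the forcing $R(\tilde{U},\tilde{\mu})=(-\tilde{\mu}\tilde{U}_1+\frac{\alpha G_{15}}{\varepsilon^2}I(\frac{x}{\varepsilon^2})\tilde{U}_2^2,\,-\tilde{\mu}\tilde{U}_2)^T$ is unchanged. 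Consequently $R_{01}=0$, $R_{11}=-1$, $R_{30}=0$, and $R_{20}$ retains exactly the same bilinear form, so the algebraic machinery producing the amplitude equation~(\ref{A-expansion}) transfers word for word.

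First I would verify that the abstract hypotheses of the reduction hold for the \emph{new} $L$. On $\mathcal{X}=(L^2(\mathbb{R}^2))^2$ with domain $\mathcal{Z}=(H^2(\mathbb{R}^2))^2$ the essential spectrum is again $(-\infty,-\hat{\mu}]$, so it does not interfere with the center subspace; by Theorem~\ref{thm4} the point spectrum on the imaginary axis is exactly the simple conjugate pair $\pm i2n_rn_i$, whence $\sigma_0=\{\pm i2n_rn_i\}$, $\sigma_+=\emptyset$, and Hypotheses~\ref{hypo2} and~\ref{hypo3-1} are met. This makes $\mathcal{E}_0$ two-dimensional, spanned by $P,\bar{P}$, and lets me invoke Lemma~\ref{lemma1} for the center manifold $\tilde{U}=U_0+\Psi(U_0,\tilde{\mu})$ and Lemmas~\ref{lemma3}–\ref{lemma4} to reach the normal form~(\ref{A-expansion}) with $Q(|A|^2,\tilde{\mu})=a\tilde{\mu}+b|A|^2+\cdots$.

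Next I would recompute the normal form coefficients through the same Fredholm solvability conditions. The eigenfunction $P$ and the adjoint eigenfunction $P^*$ now solve the boundary-matching problems for the new $L$ and $L^*$ (the adjoint carries the $\beta G_{23}$ term in its second component), so $P,P^*$ differ numerically from the $G_{22}$ case. The crucial point, however, is that because $R$ is unchanged, the solvability condition for~(\ref{right}) still gives
\beqn
a=\frac{\langle R_{11}(P)+2R_{20}(P,\Psi_{001}),P^*\rangle}{\langle P,P^*\rangle}=-1,
\eeqn
and the condition for~(\ref{right-1}) again yields $b=\frac{I_2}{2L_1}$ to leading order, with $I_1=I_3=L_2=0$ and $L_1=L_3$ by the same exponential-smallness and even-symmetry arguments. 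Only the constants $C_3,\dots,C_6$ entering $P,P^*$ change, so $b_r$ is recomputed but its role is identical.

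Finally, since $a=-1$ the amplitude equation in polar coordinates $A=re^{i\phi}$ reduces to exactly the decoupled system
\beqn
\frac{dr}{dt}=-\tilde{\mu}r+b_rr^3+O(r(|\tilde{\mu}|+r^2)^2),\qquad \frac{d\phi}{dt}=2n_rn_i+O(|\tilde{\mu}|+r^2),
\eeqn
which is the very radial equation analysed in the proof of Theorem~\ref{thm2}. The pitchfork analysis of $r$, the stability statements, the branch $r^*=O(|\tilde{\mu}|^{1/2})$ of periodic orbits, and the reconstruction of $\Gamma_{osc}$ via~(\ref{u-expansion}) followed by reversing $\tilde{U}_i=U_i-\Gamma_{i,p}$, $\tilde{\mu}=\mu-\hat{\mu}$, all carry over unchanged, giving the conclusions of Theorem~\ref{thm2}. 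I expect the only genuine work — hence the main obstacle — to be the spectral bookkeeping for the modified operator: confirming that the defect term $\frac{\beta G_{23}}{\varepsilon^2}I(\frac{x}{\varepsilon^2})$ in the $(2,2)$ slot neither produces additional purely imaginary eigenvalues nor enlarges the algebraic multiplicity of $\pm i2n_rn_i$, so that $i2n_rn_i$ remains a \emph{simple} isolated eigenvalue, the range of $(L-i2n_rn_i)$ has codimension one, and the Fredholm alternative applies exactly as before.
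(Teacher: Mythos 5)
Your proposal is correct and follows the paper's own route: the paper likewise observes that $R$ (hence $R_{01}=0$, $R_{11}=-1$, $R_{20}$, $R_{30}=0$) is unchanged from the $G_{22}U_1$ case and only $L$ is modified, so that $a=-1$ and $b=\frac{I_2}{2L_1}$ carry over, after which the polar-coordinate radial analysis from the proof of Theorem~\ref{thm2} applies verbatim. Your added care about the spectral bookkeeping for the modified operator (simplicity of $\pm i2n_rn_i$, emptiness of $\sigma_+$, applicability of the Fredholm alternative) is implicit in the paper via Theorem~\ref{thm4} but is a reasonable point to make explicit.
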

Besides, $\sigma_+=\varnothing$ gives the following remark according to Lemma \ref{lemma2}:
\begin{remark}\label{thm1-1}
  The local center manifold $\mathcal{M}_0(\mu)=V_0+\tilde{\Psi}(V_0,\mu)$ considered in this subsection is locally attracting.
\end{remark}
\begin{remark}\label{rm6}
 Remark \ref{th1}, Theorem \ref{thm2} and \ref{thm5} deal with three simple situations, which involve only one nonlinear term and the simplest linear terms. For other nonlinear $G_1$ and linear $G_2$, we can also ascertain the conditions for the occurrence of breathing pulses by using the same analytical approach.
\end{remark}
\subsection{$G_2$ is Nonlinear and $G_1$ is Linear}\label{34}
Then, we consider the case when $G_2$ is nonlinear and $G_1$ is linear. Since we are interested in mutually coupled systems, we may wish  $G_1=G_{11}+G_{13}U_2$ and $G_2=G_{21}+G_{22}U_1+G_{23}U_2+G_{24}U_1^2+G_{25}U_2^2+G_{26}U_1U_2+...$. Similarly, we study three simple situations.
\subsubsection{$G_2=G_{24}U_1^2$}
First, we assume $G_2$ is a function about $U_1^2$, i.e., $G_2=G_{24}U_1^2$. Then the pinned pulse solution can be uniquely determined by the existence of non-degenerate solutions, i.e., the assumption
\begin{flalign*}
& (A10) \ \sqrt{\frac{D}{\mu}}\left(2\mu+\frac{\alpha G_{13} b}{(\sqrt{D}+1)\sqrt{\mu}}\right)^2>2\alpha^2\beta G_{11}G_{13}G_{24}&
\end{flalign*}
holds. Moreover, we propose  hypothesises:
\begin{flalign*}
& (A11)\ 27 D\sqrt{D}b^2>2\alpha G_{13}(\beta G_{24}C_1)^3(1+\sqrt{D})^2; & \\
& (A12)\ \alpha bG_{13}>0;\\
& (A13)\ 72D\mu=-\sqrt[3]{\left(-3\sqrt{D}\left(\frac{18\alpha Db G_{13}}{1+\sqrt{D}}+\sqrt{\Delta}\right)\right)^2}-\sqrt[3]{\left(-3\sqrt{D}\left(\frac{18\alpha Db G_{13}}{1+\sqrt{D}}-\sqrt{\Delta}\right)\right)^2}\\
&\ +4\sqrt[3]{-3\sqrt{D}\left(\frac{18\alpha Db G_{13}}{1+\sqrt{D}}+\sqrt{\Delta}\right)}\sqrt[3]{-3\sqrt{D}\left(\frac{18\alpha Db G_{13}}{1+\sqrt{D}}-\sqrt{\Delta}\right)} \ admits\  positive\  roots\ \mu,  \ \ \ \ \\
&\ where\  \Delta=\left(\frac{18\alpha Db G_{13} }{1+\sqrt{D}}\right)^2-24\sqrt{D}\left(\alpha\beta G_{13} G_{24} C_1\right)^3.
\end{flalign*}
Therefore, there is
\begin{theorem}
  Assume that (A10), (A11), (A12) and (A13) hold, then the system considered above exists parameter $\hat{\mu}$ such that Hopf bifurcation takes place.
\end{theorem}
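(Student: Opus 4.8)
The plan is to reproduce, essentially verbatim, the mechanism that yielded \thmref{th3}, because the present choice $G_1=G_{11}+G_{13}U_2$ and $G_2=G_{24}U_1^2$ again reduces the spectral problem to a depressed cubic. First I would invoke (A10) to guarantee that the existence relations (\ref{existence}) admit a non-degenerate pair $(C_1,C_2)$, so that the pinned pulse $\Gamma_p$ given by (\ref{Dno11}) is well defined.

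Next I would linearize (\ref{orginal}) about $\Gamma_p$ and substitute the partial derivatives $\partial_{U_1}G_1=0$, $\partial_{U_2}G_1=G_{13}$, $\partial_{U_1}G_2=2G_{24}U_{1,p}$, $\partial_{U_2}G_2=0$ into the matching conditions (\ref{stability}). This turns the eigenvalue problem into a homogeneous $2\times2$ linear system for the eigenfunction amplitudes $(C_3,C_4)$ of (\ref{Dno22}). Writing $t:=\sqrt{\mu+\lambda}$ with $\mathrm{Re}\,t>0$ and clearing the $(-1+D)$ and $\sqrt{D}$ denominators (legitimate since $D\neq1$), setting the determinant to zero should collapse everything to
\beqn
H_2(t):=t^3+B_2t+E_2=0,\qquad B_2=-\frac{\alpha\beta G_{13}G_{24}C_1}{2\sqrt{D}},\quad E_2=\frac{\alpha bG_{13}}{2(\sqrt{D}+1)}.
\eeqn

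With $H_2$ in hand the sign analysis runs exactly as for $H_1$. Condition (A12) is precisely $E_2>0$; since $H_2(0)=E_2>0$ while $H_2(t)\to-\infty$ as $t\to-\infty$, the unique real root is negative, and because the $t^2$-coefficient vanishes, Vieta forces the complex pair to be $t=n_r\pm n_i i$ with $n_r=-\tfrac12 t_{\mathrm{real}}>0$, matching the requirement $\mathrm{Re}\,t>0$. Condition (A11) is nothing but the discriminant inequality $4B_2^3+27E_2^2>0$—equivalently $81E_2^2+12B_2^3>0$ in Shengjin's notation—which certifies that the two real roots have merged into this conjugate pair. Applying the Shengjin formula \cite{sj89} to $H_2$ then produces explicit $n_r,n_i$, and $\mu+\lambda=t^2$ gives $\lambda=(n_r^2-n_i^2-\mu)+2n_rn_i\,i$. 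Imposing $\mu=n_r^2-n_i^2$, which after inserting the Shengjin expressions and the displayed discriminant is exactly (A13), forces $\mathrm{Re}\,\lambda=0$ and $\lambda=\pm2n_rn_i\,i\neq0$, so a simple conjugate pair sits on the imaginary axis at $\mu=\hat{\mu}$.

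I expect the main obstacle to be the algebraic bookkeeping in reducing (\ref{stability}) to $H_2$: tracking the $(-1+D)=(\sqrt{D}-1)(\sqrt{D}+1)$ cancellations so that the cubic collapses to the stated coefficients, and then verifying that the Shengjin output for $n_r^2-n_i^2$ coincides with the explicit polynomial identity in (A13). The only remaining point is transversality of the crossing; since $B_2$ and $E_2$ depend on $\mu$ through $C_1$, the quantity $\mathrm{Re}\,\lambda=n_r^2-n_i^2-\mu$ genuinely varies with $\mu$, and a nonzero crossing speed at $\hat{\mu}$—together with the simplicity of $\pm2n_rn_i i$ already established through $H_2$—certifies a genuine Hopf bifurcation.
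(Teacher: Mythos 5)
Your proposal is correct and follows essentially the same route as the paper: the paper states this theorem without a separate proof, relying on the mechanism of Theorem \ref{th3}, and your reduction of the eigenvalue matching conditions to the depressed cubic $t^3+B_2t+E_2=0$ with $B_2=-\frac{\alpha\beta G_{13}G_{24}C_1}{2\sqrt{D}}$, $E_2=\frac{\alpha bG_{13}}{2(\sqrt{D}+1)}$ is exactly the paper's cubic (the paper works with the version scaled by $2\sqrt{D}$, which is why its $\Delta$ in (A13) is $16D^2$ times yours), with (A12) $\Leftrightarrow E_2>0$, (A11) $\Leftrightarrow$ the discriminant inequality, and (A13) $\Leftrightarrow \mu=n_r^2-n_i^2$, all of which check out. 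Your Vieta argument for the positivity of the real part of the complex pair is a slightly cleaner justification than the paper's root-merging continuation argument, but the substance is identical.
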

Here,
\beq
\begin{aligned}
R(\tilde{U},\tilde{\mu})=
\begin{pmatrix}
-\tilde{\mu} \tilde{U}_1 \\
-\tilde{\mu} \tilde{U}_2+\frac{\beta G_{24}}{\varepsilon^2}I(\frac{x}{\varepsilon^2})\tilde{U}_1^2
\end{pmatrix},
\ L=
\begin{pmatrix}
\frac{d^2}{dx^2}-\hat{\mu} & \frac{\alpha G_{13}}{\varepsilon^2}I(\frac{x}{\varepsilon^2})\\
-b+\frac{2\beta G_{24}}{\varepsilon^2}I(\frac{x}{\varepsilon^2})\Gamma_{1,p} & D\frac{d^2}{dx^2}-\hat{\mu}  \nonumber
\end{pmatrix}.
\end{aligned}
\eeq
Hence, we derive
\beq
\begin{aligned}
a&=-1,\\
b&=\frac{\int_{\mathbb{R}}^{\ }\left(4|P_{1}|^2(\frac{\beta G_{24}}{\varepsilon^2})^2I(\frac{x}{\varepsilon^2})\overline{P_{1}}(-L)^{-1}_{22}I(\frac{x}{\varepsilon^2})+2(\frac{\beta G_{24}}{\varepsilon^2})^2I(\frac{x}{\varepsilon^2})\overline{P_{1}}(i4n_rn_i-L)^{-1}_{22}I(\frac{x}{\varepsilon^2})P_{1}^2\right)\overline{P_2^*}dx}{\langle P, P^*\rangle}.\nonumber
\end{aligned}
\eeq
Now, the operator $(-L)^{-1}_{22}$ and $(i4n_rn_i-L)^{-1}_{22}$ are bounded, and we can obtain $b=\frac{I_2}{2L_1}$ by the similar analysis as before.
\begin{theorem}\label{thm8}
  Assume that the real part of above $b$ is not equal to 0, then for the system above, we get the same conclusions as in Theorem \ref{thm2}.
\end{theorem}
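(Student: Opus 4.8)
The plan is to repeat verbatim the reduction already carried out for Theorem~\ref{thm2}, exploiting the fact that the only structural change in passing from $G_2=G_{22}U_1$ to $G_2=G_{24}U_1^2$ is that the quadratic nonlinearity now sits in the second component of $R$ rather than the first. Concretely, I would first note that under (A10)--(A13) the linearization $L$ displayed above has a pair of purely imaginary simple eigenvalues $\pm i2n_rn_i$ at $\mu=\hat{\mu}$, so Hypotheses~\ref{hypo1}, \ref{hypo2} and \ref{hypo3-1} are satisfied on $\mathcal{X}=(L^2(\mathbb{R}^2))^2$, $\mathcal{Y}=\mathcal{Z}=(H^2(\mathbb{R}^2))^2$. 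Lemma~\ref{lemma1} then produces a two-dimensional center manifold $\mathcal{E}_0=\mathrm{span}\{P,\bar P\}$, Lemma~\ref{lemma3} puts the reduced flow in normal form, and Lemma~\ref{lemma4} collapses it to the scalar amplitude equation~(\ref{A-expansion}). The entire skeleton of the argument is therefore identical; what must be checked is only that the two complex coefficients $a$ and $b$ retain the properties used in the proof of Theorem~\ref{thm2}.

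The decisive observation is that $R_{01}=0$ and $R_{11}=-1$ exactly as before, so $\Psi_{001}$ solves $-L\Psi_{001}=0$ and hence vanishes, giving $a=\langle -P,P^*\rangle/\langle P,P^*\rangle=-1$. Thus $a_r=\mathrm{Re}\,a=-1<0$ is pinned independently of the nonlinearity, and the radial part of~(\ref{real-ima}) reads
\beqn
\frac{dr}{dt}=-\tilde{\mu}\,r+b_r r^3+O\!\left(r(|\tilde{\mu}|+r^2)^2\right),
\eeqn
which decouples from the phase. The coefficient $b$ is computed from~(\ref{right-1}) through the Fredholm alternative against $P^*$, the only difference being that the quadratic interaction $R_{20}$ now feeds the ${}_{22}$ entries of $(-L)^{-1}$ and $(i4n_rn_i-L)^{-1}$ instead of the ${}_{11}$ entries; both resolvents are bounded because $0$ and $i4n_rn_i$ avoid the point spectrum $\{\pm i2n_rn_i\}$. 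The same localization estimate as in Theorem~\ref{thm2} --- the kernel $I(x/\varepsilon^2)$ is exponentially small off $I_f$ while $P,P^*$ are bounded --- kills the outer integrals and the inner $L_2$ term, leaving $b=I_2/(2L_1)$ to leading order.

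With $a_r=-1$ fixed and $b_r\neq0$ by hypothesis, the radial equation undergoes a pitchfork at $\tilde{\mu}=0$: the trivial branch $r=0$ is stable for $\tilde{\mu}>0$ and unstable for $\tilde{\mu}<0$, and a nontrivial branch $r^*=\sqrt{\tilde{\mu}/b_r}=O(|\tilde{\mu}|^{1/2})$ exists precisely on the side of $\hat{\mu}$ for which $\tilde{\mu}/b_r>0$, with stability opposite to that of $r=0$. Translating $r^*$ back through~(\ref{u-expansion}) and undoing $\tilde{U}=U-\Gamma_p$, $\tilde{\mu}=\mu-\hat{\mu}$ reproduces the bifurcating periodic orbit and the breathing pulse $\Gamma_{osc}=\Gamma_p+\sqrt{|(\mu-\hat{\mu})/b_r|}\,e^{i2n_rn_it}P_{\hat{\mu}}+\text{c.c.}+O(|\mu|)$, so the conclusions of Theorem~\ref{thm2} hold verbatim.

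I expect the genuine work to be hidden in two places that I would treat as already supplied by the preceding development. First, one must confirm that $i2n_rn_i$ is an \emph{algebraically simple} isolated eigenvalue of the new $L$, so that $\ker(L^*+i2n_rn_i)$ is one-dimensional and the solvability conditions defining $a,b$ are the stated inner-product quotients; this rests on the scale-separated matching structure of~(\ref{Dno11})--(\ref{Dno22}) rather than on which component carries the nonlinearity. Second, the reduction $b=I_2/(2L_1)$ requires the leading-order localization argument to survive the swap of nonlinear component, which it does because the exponential smallness of $I(x/\varepsilon^2)$ outside $I_f$ is insensitive to that swap. Everything else is a transcription of the proof of Theorem~\ref{thm2}.
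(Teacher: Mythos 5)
Your proposal is correct and takes essentially the same approach as the paper: the paper likewise proves Theorem \ref{thm8} by transcribing the proof of Theorem \ref{thm2}, observing that $R_{01}=0$ and $R_{11}=-1$ again force $a=-1$, that the quadratic nonlinearity now enters through the bounded resolvent entries $(-L)^{-1}_{22}$ and $(i4n_rn_i-L)^{-1}_{22}$ rather than the ${}_{11}$ entries, and that the localization argument again reduces $b$ to $\frac{I_2}{2L_1}$. The radial pitchfork analysis with $a_r=-1$, $b_r\neq 0$ and the reversal of the transformation $\tilde{U}=U-\Gamma_p$, $\tilde{\mu}=\mu-\hat{\mu}$ then carry over verbatim, exactly as you describe.
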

\subsubsection{$G_2=G_{25}U_2^2$}
Then, we assume $G_2$ is a function about $U_2^2$, i.e., $G_2=G_{25}U_2^2$. For this system, its pinned pulse solution must satisfy
\begin{flalign*}
& (A14)\ \left(\frac{4\sqrt{\mu}G_{11}}{\alpha}+\frac{4\sqrt{D}G_{13}\mu}{\alpha \beta G_{25}}+\frac{2b(G_{13})^2(D-\sqrt{D})}{\beta \sqrt{\mu}(-1+D)G_{25}}\right)^2>\frac{16\mu}{\alpha^2}\left((G_{11})^2+\frac{2\sqrt{D\mu}G_{11}G_{13}}{\beta G_{25}}\right).&
\end{flalign*}
Also, we propose hypothesises:
\begin{flalign*}
& (A15)\  27 D\left(\frac{b(D-\sqrt{D})}{-1+D}\right)^2>\frac{8 b(D-\sqrt{D})}{\alpha G_{13}(-1+D)}\left(\beta G_{25}\left(C_2+\frac{b C_1}{(-1+D)\mu}\right)\right)^3;&\\
& (A16)\  2\beta G_{25}\left(C_2+\frac{b C_1}{(-1+D)\mu}\right)\leq\\
&\ \alpha G_{13}\left(\sqrt[3]{-\left(\frac{2\beta G_{25}}{\alpha G_{13}}(C_2+\frac{b C_1}{(-1+D)\mu})\right)^3+\frac{3\sqrt{D}}{\alpha G_{13}}\left(\frac{18bD}{\alpha G_{13}(\sqrt{D}+1)}+\sqrt{\Delta}\right)}\right.\ \ \ \ \ \ \ \ \ \ \ \\
&\ \left.+\sqrt[3]{-\left(\frac{2\beta G_{25}}{\alpha G_{13}}(C_2+\frac{b C_1}{(-1+D)\mu})\right)^3+\frac{3\sqrt{D}}{\alpha G_{13}}\left(\frac{18bD}{\alpha G_{13}(\sqrt{D}+1)}-\sqrt{\Delta}\right)}\right),\\
&\ where\  \Delta=\left(\frac{18bD}{\alpha G_{13}(\sqrt{D}+1)}\right)^2-\frac{96 b\sqrt{D}}{\sqrt{D}+1}\left(\frac{\beta G_{25}}{\alpha G_{13}}(C_2+\frac{b C_1}{(-1+D)\mu})\right)^3;\\
& (A17)\ \frac{\alpha^2(G_{13})^2}{36D}\mu=-\frac{1}{2}\sqrt[3]{\left(-\left(\frac{2\beta G_{25}}{\alpha G_{13}}(C_2+\frac{b C_1}{(-1+D)\mu})\right)^3+\frac{3\sqrt{D}}{\alpha G_{13}}\left(\frac{18bD}{\alpha G_{13}(\sqrt{D}+1)}+\sqrt{\Delta}\right)\right)^2}\\
&\ -\frac{1}{2}\sqrt[3]{\left(-\left(\frac{2\beta G_{25}}{\alpha G_{13}}(C_2+\frac{b C_1}{(-1+D)\mu})\right)^3+\frac{3\sqrt{D}}{\alpha G_{13}}\left(\frac{18bD}{\alpha G_{13}(\sqrt{D}+1)}-\sqrt{\Delta}\right)\right)^2}\\
&\ +\sqrt{-\left(\frac{2\beta G_{25}}{\alpha G_{13}}(C_2+\frac{b C_1}{(-1+D)\mu})\right)^3+\frac{3\sqrt{D}}{\alpha G_{13}}\left(\frac{18bD}{\alpha G_{13}(\sqrt{D}+1)}+\sqrt{\Delta}\right)}\\
&\ \cdot\sqrt{-\left(\frac{2\beta G_{25}}{\alpha G_{13}}(C_2+\frac{b C_1}{(-1+D)\mu})\right)^3+\frac{3\sqrt{D}}{\alpha G_{13}}\left(\frac{18bD}{\alpha G_{13}(\sqrt{D}+1)}-\sqrt{\Delta}\right)}\\
&\ \cdot\left(2+2\left(\frac{\beta G_{25}}{\alpha G_{13}}(C_2+\frac{b C_1}{(-1+D)\mu})\right)\right)+4\left(\frac{\beta G_{25}}{\alpha G_{13}}(C_2+\frac{b C_1}{(-1+D)\mu})\right)^2 \ admits\  positive\ roots\  \mu.
\end{flalign*}
Hence, we derive
\begin{theorem}
  Assume that (A14), (A15), (A16) and (A17) hold, then the Hopf bifurcation takes place with the bifurcation parameter $\mu=\hat{\mu}$.
\end{theorem}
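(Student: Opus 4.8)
The plan is to mirror the argument already used for Theorem~\ref{th3} and Theorem~\ref{thm4}: reduce the eigenvalue problem to a real cubic in $t:=\sqrt{\mu+\lambda}$ and then tune $\mu$ so that this cubic produces a purely imaginary pair $\lambda$. First I would linearize the present system (with $G_1=G_{11}+G_{13}U_2$ and $G_2=G_{25}U_2^2$) about the pinned pulse $\Gamma_p$ furnished by (A14), and match the accumulated jumps of the derivatives across the fast layer $I_f$ exactly as in Subsection~\ref{31}. Since $\partial G_1/\partial U_1=0$, $\partial G_1/\partial U_2=G_{13}$ and $\partial G_2/\partial U_2=2G_{25}(C_2+\frac{bC_1}{(-1+D)\mu})$ at the pulse, this yields a linear system for the eigenfunction coefficients $(C_3,C_4)$ whose solvability condition, after clearing the $\sqrt{\mu+\lambda}$ factors, is a depressed cubic $H_2(t):=t^3+B_2 t+E_2=0$, with $B_2,E_2$ expressed through $\alpha,\beta,b,D,G_{13},G_{25}$ and the amplitude $C_2+\frac{bC_1}{(-1+D)\mu}$, in complete analogy with $H_1$ from the $G_2=G_{22}U_1$ case.

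Next I would carry out the standard critical-point analysis of $H_2$. When $B_2<0$ the cubic has a local minimum at $t=\sqrt{-B_2/3}$; if in addition $E_2>0$ and $H_2(\sqrt{-B_2/3})<0$ there are two positive real roots, which coalesce and split into a complex-conjugate pair precisely when the discriminant changes sign. Assumptions (A15) and (A16) are exactly the sign and coalescence conditions (the analogues of (A2)--(A4)) guaranteeing such a conjugate pair $t_{1,2}=n_r\pm n_i i$, whose explicit form is read off from the Shengjin formula \cite{sj89}. Squaring gives $\mu+\lambda=t^2=(n_r^2-n_i^2)+2n_rn_i i$, hence $\lambda=(n_r^2-n_i^2-\mu)+2n_rn_i i$; imposing $\mu=n_r^2-n_i^2$ forces $\operatorname{Re}\lambda=0$ and leaves $\lambda=\pm2n_rn_i i$. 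Because $n_r,n_i$ themselves depend on $\mu$ through $B_2,E_2$ (and through $C_1,C_2$), the relation $\mu=n_r^2-n_i^2$ is an implicit scalar equation in $\mu$, and (A17) is exactly the solvability statement that it admits a positive root, which we name $\hat\mu$.

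The main obstacle is the self-referential nature of this final tuning, and two further points must be checked to upgrade a mere spectral tangency to a genuine Hopf bifurcation. First, the depressed cubic $H_2$ has vanishing $t^2$-coefficient, so its three roots sum to zero; the conjugate pair being $n_r\pm n_i i$ forces the third root to equal $-2n_r<0$, which violates the branch requirement $\operatorname{Re}t>0$ imposed in Subsection~\ref{31} and is therefore inadmissible, so that, together with $\sigma_{ess}=(-\infty,-\mu]$, the pair $\pm2n_rn_i i$ is the only neutral spectrum at $\hat\mu$. Second, transversality $\frac{d}{d\mu}\operatorname{Re}\lambda\big|_{\mu=\hat\mu}\neq0$ follows by differentiating $H_2(t(\mu),\mu)=0$ implicitly and verifying that $\frac{d}{d\mu}\operatorname{Re}(t^2)-1$ does not vanish under (A15)--(A17). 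These two verifications, rather than the algebra of the cubic itself, are where the real work lies; once they are in hand the existence of the purely imaginary pair at $\mu=\hat\mu$ delivers the claimed Hopf bifurcation.
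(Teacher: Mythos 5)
Your high-level plan (reduce the jump conditions to a cubic in $t=\sqrt{\mu+\lambda}$, impose Shengjin-type conditions for a complex-conjugate pair, then tune $\mu$ so that $\mu=n_r^2-n_i^2$) is exactly the paper's strategy, but you have misidentified the cubic, and this error invalidates the middle of your argument. You correctly note that at the pulse $\partial G_2/\partial U_2=2G_{25}\bigl(C_2+\tfrac{bC_1}{(-1+D)\mu}\bigr)$, but this term occupies the slot of $G_{23}$ in the general equation (\ref{lambda-3}), not the slot of $G_{22}$: writing $M:=C_2+\tfrac{bC_1}{(-1+D)\mu}$ and substituting the effective coefficients $G_{12}^{\mathrm{eff}}=0$, $G_{22}^{\mathrm{eff}}=0$, $G_{23}^{\mathrm{eff}}=2G_{25}M$ into (\ref{lambda-3}) gives
\begin{equation*}
2t^3-\frac{2\beta G_{25}M}{\sqrt{D}}\,t^2+\frac{\alpha b G_{13}}{\sqrt{D}+1}=0,
\end{equation*}
a cubic with \emph{nonvanishing quadratic} coefficient and \emph{vanishing linear} coefficient (the linear coefficient is proportional to $G_{12}^{\mathrm{eff}}G_{23}^{\mathrm{eff}}-G_{13}G_{22}^{\mathrm{eff}}=0$). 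So it is not a depressed cubic $t^3+B_2t+E_2$ ``in complete analogy with $H_1$''; the correct analogue is the $G_2=G_{23}U_2$ case of Theorem \ref{thm4} with hypotheses (A7)--(A9), not the $G_2=G_{22}U_1$ case with (A2)--(A4). This is confirmed by the paper's own hypotheses: using $\tfrac{D-\sqrt{D}}{D-1}=\tfrac{\sqrt{D}}{\sqrt{D}+1}$, condition (A15) is precisely the statement that the local minimum value $\tfrac{4A^3}{27}+E$ of $t^3+At^2+E$ (attained at $t=-\tfrac{2A}{3}$, not at your $\sqrt{-B_2/3}$) is positive, with $A=-\tfrac{\beta G_{25}M}{\sqrt{D}}$ and $E=\tfrac{\alpha bG_{13}}{2(\sqrt{D}+1)}$, and (A16)--(A17) place $\beta G_{25}M$ exactly where Shengjin's formula for a cubic with a quadratic term requires it.

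Two of your concrete steps therefore fail. The critical-point analysis with minimum at $\sqrt{-B_2/3}$ does not apply, and your claim that the third root equals $-2n_r<0$ ``because the three roots sum to zero'' is false here: the roots sum to $\tfrac{\beta G_{25}M}{\sqrt{D}}\neq0$ in general. (The real root is still negative whenever $\alpha bG_{13}>0$, since the product of the roots equals $-E$, so $r=-E/(n_r^2+n_i^2)<0$; but that is a different argument from the one you gave.) Your final transversality check is an addition the paper does not perform, and in any case it cannot be ``verified under (A15)--(A17)'' by implicitly differentiating the wrong polynomial $H_2$; it would have to be redone for the cubic displayed above.
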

In this case, there are
\beq
\begin{aligned}
\ \ R(\tilde{U},\tilde{\mu})=
\begin{pmatrix}
-\tilde{\mu} \tilde{U}_1 \\
-\tilde{\mu} \tilde{U}_2+\frac{\beta G_{25}}{\varepsilon^2}I(\frac{x}{\varepsilon^2})\tilde{U}_2^2
\end{pmatrix},
\ L=
\begin{pmatrix}
\frac{d^2}{dx^2}-\hat{\mu} & \frac{\alpha G_{13}}{\varepsilon^2}I(\frac{x}{\varepsilon^2})\\
-b & D\frac{d^2}{dx^2}-\hat{\mu}+\frac{2\beta G_{25}}{\varepsilon^2}I(\frac{x}{\varepsilon^2})\Gamma_{2,p}  \nonumber
\end{pmatrix}
\end{aligned},
\eeq
which give
\beq
\begin{aligned}
a&=-1,\\
b&=\frac{\int_{\mathbb{R}}^{\ }\left(4|P_{2}|^2(\frac{\beta G_{24}}{\varepsilon^2})^2I(\frac{x}{\varepsilon^2})\overline{P_{2}}(-L)^{-1}_{22}I(\frac{x}{\varepsilon^2})+2(\frac{\beta G_{24}}{\varepsilon^2})^2I(\frac{x}{\varepsilon^2})\overline{P_{2}}(i4n_rn_i-L)^{-1}_{22}I(\frac{x}{\varepsilon^2})P_{2}^2\right)\overline{P_2^*}dx}{\langle P, P^*\rangle}.
\end{aligned}\nonumber
\eeq
\begin{theorem}\label{thm10}
  Assume that the real part of above $b$ is not equal to 0, then for this case, we get the same conclusions as Theorem \ref{thm2}.
\end{theorem}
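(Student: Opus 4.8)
The plan is to reduce the statement to the abstract framework already used for \thmref{thm2}, since the system here has exactly the same structure with the quadratic nonlinearity merely transferred from the first to the second component. First I would check that the operator $L$ displayed above, together with $R$, satisfies Hypotheses \ref{hypo1}, \ref{hypo2} and \ref{hypo3-1} on the spaces $\mathcal{X}=(L^2(\mathbb{R}^2))^2$ and $\mathcal{Y}=\mathcal{Z}=(H^2(\mathbb{R}^2))^2$: the linear part $L$ is closed with domain $\mathcal{Z}$, $R$ is smooth with $R(0,0)=0$ and $D_{\tilde U}R(0,0)=0$, and under (A14)--(A17) the point spectrum on the imaginary axis is exactly the simple conjugate pair $\lambda=\pm i2n_rn_i$, while the essential spectrum remains in $(-\infty,-\hat\mu]$ and the resolvent decays like $c/|\omega|$ along $i\mathbb{R}$. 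This makes \lemref{lemma1}, \lemref{lemma2} and \lemref{lemma3} applicable, so that near $\Gamma_p$ the flow is captured by a two-dimensional center manifold spanned by the Hopf eigenfunctions $\{P,\bar P\}$.

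Next I would carry out the normal form reduction exactly as in the argument preceding \thmref{thm2}. Writing $V_0(t)=A(t)P+\overline{A(t)P}$ and invoking \lemref{lemma4}, the reduced dynamics collapse to the amplitude equation $\frac{dA}{dt}=i2n_rn_iA+a\tilde\mu A+b|A|^2A+\cdots$, with $a$ and $b$ determined by the Fredholm solvability conditions against the adjoint eigenfunction $P^*$. In the present configuration the quadratic part $R_{20}$ acts through the second component, so the resolvents enter through their $_{22}$-entries and the projection is taken against $\overline{P_2^*}$; the computation displayed just before the theorem then gives $a=-1$ and $b=\frac{I_2}{2L_1}$. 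Passing to polar coordinates $A=re^{i\phi}$ decouples the radial equation into $\frac{dr}{dt}=-\tilde\mu r+b_r r^3+O(r(|\tilde\mu|+r^2)^2)$, which is precisely the pitchfork normal form analyzed in \thmref{thm2}.

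The conclusion then follows verbatim. When $b_r\neq0$ the radial equation undergoes a pitchfork bifurcation at $\tilde\mu=0$: the branch $r=0$ switches stability, and a nontrivial branch $r^*=O(|\tilde\mu|^{1/2})$ with $r^{*2}=\tilde\mu/b_r$ bifurcates off with the opposite stability, supercritically when $b_r<0$ and subcritically when $b_r>0$. Reconstructing $u=A P+\overline{AP}+\tilde\Psi(AP+\overline{AP},\tilde\mu)$ and reversing $\tilde U_i=U_i-\Gamma_{i,p}$, $\tilde\mu=\mu-\hat\mu$ recovers the stable oscillating solution $\Gamma_{osc}$ surrounding $\Gamma_p$, which is exactly the dichotomy of \thmref{thm2}.

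The main obstacle I anticipate is the explicit evaluation of $b$, namely justifying the reduction $b=\frac{I_2}{2L_1}$ in the case where the nonlinearity sits in the $U_2$-equation. This requires showing that the $_{22}$-entries of $(-L)^{-1}$ and $(i4n_rn_i-L)^{-1}$ are bounded, that the outer contributions $I_1,I_3$ and $L_2$ vanish to leading order because $I(x/\varepsilon^2)$ is exponentially small outside the fast interval $I_f$, and that $L_1=L_3$ by the even symmetry of the integrand. The genuinely new point relative to \thmref{thm2} is that $P$ and $P^*$ now carry different leading-order constants on $I_f$, so one must first re-derive the leading-order constancy of $P_2$ and $\overline{P_2^*}$ on $I_f$ before the quotient defining $b$ collapses to $\frac{I_2}{2L_1}$.
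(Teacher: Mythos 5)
Your proposal is correct and follows essentially the same route as the paper: the paper gives no separate proof for this theorem, instead relying on the computation of $a=-1$ and $b=\frac{I_2}{2L_1}$ (with the resolvents entering through their $_{22}$-entries and the projection against $\overline{P_2^*}$) and then invoking verbatim the radial pitchfork analysis from the proof of Theorem~\ref{thm2}. Your additional care about justifying $I_1,I_3,L_2$ vanishing to leading order and the constancy of $P_2$, $\overline{P_2^*}$ on $I_f$ matches the paper's stated reasoning for the analogous reductions in subsection~\ref{33}.
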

\subsubsection{$G_2=G_{26}U_1U_2$}
Finally, we consider the case when $G_2=G_{26}U_1U_2$, whose pulse solution can be given if
\beali\label{more}
2\sqrt{\mu}C_1&=\alpha\left(G_{11} +G_{13}\left(C_2+\frac{b C_1}{(-1+D)\mu}\right)\right),\\
2\left(\sqrt{\frac{\mu}{D}}C_2+\frac{b\sqrt{\mu}}{(-1+D)\mu}C_1\right)&=\frac{\beta G_{26}C_1}{D}\left(C_2+\frac{b C_1}{(-1+D)\mu}\right)
\enali
admits non-degenerate solutions $C_1$ and $C_2$. Correspondingly, the existence condition of eigenvalue function is that
\begin{small}
\beali\label{and more}
2\sqrt{\mu+\lambda}C_3&=\alpha\left(G_{13}\left(C_4+\frac{b C_3}{(-1+D)(\mu+\lambda)}\right)\right),\\
\frac{\beta G_{26}C_3}{D}\left(C_2+\frac{b C_1}{(-1+D)\mu}\right)&+\frac{\beta G_{26}C_1}{D}\left(C_4+\frac{b C_3}{(-1+D)(\mu+\lambda)}\right)=2\sqrt{\frac{\mu+\lambda}{D}}C_4+\frac{2b C_3}{(-1+D)\sqrt{\mu+\lambda}}
\enali
\end{small}
admits non-degenerate solutions. Assuming that the existence assumptions about (\ref{more}) and (\ref{and more}) hold, then there exists a parameter $\hat{\mu}$ such that this system undergoes a Hopf bifurcation. We omit the exact expression here due to its verbosity.

Note that in this case
\beq
R(\tilde{U},\tilde{\mu})=
\begin{pmatrix}
-\tilde{\mu} \tilde{U}_1 \\
-\tilde{\mu} \tilde{U}_2+\frac{\beta G_{26}}{\varepsilon^2}I(\frac{x}{\varepsilon^2})\tilde{U}_1\tilde{U}_2
\end{pmatrix},
\
L=
\begin{pmatrix}
\frac{d^2}{dx^2}-\hat{\mu} & \frac{\alpha G_{13}}{\varepsilon^2}I(\frac{x}{\varepsilon^2})\\
-b+\frac{\beta G_{26}}{\varepsilon^2}I(\frac{x}{\varepsilon^2})\Gamma_{2,p} & D\frac{d^2}{dx^2}-\hat{\mu}+\frac{\beta G_{26}}{\varepsilon^2}I(\frac{x}{\varepsilon^2})\Gamma_{1,p}  \\
\end{pmatrix},\nonumber
\eeq
i.e.,
\beq
R_{01}=0,\ R_{11}=-1,\ R_{20}(U,V)=\binom{0}{\frac{\beta G_{26}}{2\varepsilon^2}I(\frac{x}{\varepsilon^2})(U_1V_2+V_1U_2) },\ R_{30}=0.\nonumber
\eeq
In this case, we can still calculate $a$ and $b$. However, the calculation of $b$ requires many lengthy equalities so we will not delve into more detail here.
\begin{remark}
  Theorem \ref{thm8} and \ref{thm10} study the situations where $G_2$ exhibits the simplest nonlinearity. When $G_2$ involves more nonlinear terms, we can still analyze in a similar way.
\end{remark}
Also, $\sigma_+=\varnothing$ gives the same remark:
\begin{remark}
  The local center manifold $\mathcal{M}_0(\mu)=V_0+\tilde{\Psi}(V_0,\mu)$ considered in this subsection is locally attracting.
\end{remark}
\begin{remark}\label{412}
  In the subsection \ref{33} and \ref{34}, we use the assumption that the real part of $b$ is nonzero, which can be removed if we consider the higher order terms like $O(A|A|^4)$ etc.
\end{remark}
\subsection{A concrete example}\label{35}
To illustrate the above Remark \ref{rm6}, we present the computation of a specific example, which was previously proposed in \cite{DvHS} to analyze the Hopf bifurcation. The system we are considering is
\beali\label{nu}
  \frac{\partial U_1}{\partial t}&=\frac{\partial^2 U_1}{\partial x^2}-\mu U_1+\frac{2}{\varepsilon^2}I(\frac{x}{\varepsilon^2})(U_2+1+\nu U_1^3),\\
  \frac{\partial U_2}{\partial t}&=4\frac{\partial^2 U_2}{\partial x^2}-\frac{\sqrt{3}}{3} U_1-\mu U_2+\frac{2}{\varepsilon^2}I(\frac{x}{\varepsilon^2})(U_1+2).
\enali
For this system, if polynomial equations
\beali\label{nu1}
\sqrt{\mu}\tilde{C}_1&=\tilde{C_2}+\frac{\tilde{C_1}}{3\sqrt{3}\mu}+1+\nu \tilde{C_1}^3,\\
\tilde{C_1}+2&=4\left(\frac{\sqrt{\mu}}{2}\tilde{C_2}+\frac{\tilde{C_1}}{3\sqrt{3\mu}}\right)
\enali
admit non-degenerate solutions $\tilde{C_1}$ and $\tilde{C_2}$. Its pinned one-pulse solution, to leading order, can be given as
\beq
\tilde{U}_{1,p}(x)=\begin{cases}
\tilde{C}_1e^{\sqrt{\mu} x}, & x \in I_s^-,\\
\tilde{C}_1,& x \in I_f,\\
\tilde{C}_1e^{-\sqrt{\mu}x}, & x \in I_s^+,\nonumber
\end{cases}
\ \tilde{U}_{2,p}(x)=\begin{cases}
\tilde{C}_2e^\frac{\sqrt{\mu}x}{2}+\frac{ \tilde{C_1}}{3\sqrt{3}\mu}e^{\sqrt{\mu}x}, & x \in I_s^-,\\
\tilde{C_2}+\frac{ \tilde{C_1}}{3\sqrt{3}\mu}, & x \in I_f,\\
\tilde{C}_2e^\frac{-\sqrt{\mu}x}{2}+\frac{ \tilde{C_1}}{3\sqrt{3}\mu}e^{-\sqrt{\mu}x}, & x \in I_s^+.\nonumber
\end{cases}
\eeq
By calculation, equations (\ref{nu1}) equal to
\beq\label{nu2}
\nu\sqrt{\mu}\tilde{C_1}^3+\frac{3\sqrt{3\mu}-2-6\sqrt{3\mu}\mu}{6\sqrt{3\mu}}\tilde{C_1}+1+\sqrt{\mu}=0.
\eeq
If we consider the small nonlinearity as perturbations to the linear $G_1$, we may as well assume $\nu\ll1$. Then (\ref{nu2}) can be solved as: $\tilde{C_1}=\frac{-6\sqrt{3}\mu-6\sqrt{3\mu}}{-6\sqrt{3\mu}\mu+3\sqrt{3\mu}-2}$, $\tilde{C_2}=\frac{6\sqrt{3\mu}\mu+3\sqrt{3}\mu-4\sqrt{\mu}-2}{6\sqrt{3}\mu^2-3\sqrt{3}\mu+2\sqrt{\mu}}$, to leading order. The eigenvalue problem about this pinned 1-pulse solution $\tilde{\Gamma}_p(x)=(\tilde{\Gamma}_{1,p},\tilde{\Gamma}_{2,p})^T=(\tilde{U}_{1,p}(x)+O(\varepsilon), \tilde{U}_{2,p}(x)+O(\varepsilon))^T$ is
\beali
  0&=\frac{d^2 p_1}{d x^2}-(\mu+\lambda) p_1+\frac{2}{\varepsilon^2}I(\frac{x}{\varepsilon^2})(p_2+3\nu\tilde{\Gamma}_{1,p}^2p_1),\\
  0&=4\frac{d^2 p_2}{d x^2}-\frac{\sqrt{3}}{3} p_1-(\mu+\lambda) p_2+\frac{2}{\varepsilon^2}I(\frac{x}{\varepsilon^2})(p_1),
\enali
and its eigenfunction
\beq
\tilde{P_1}(x)=\begin{cases}
\tilde{C_3}e^{\sqrt{\mu+\lambda}x}, & x \in I_s^-,\\
\tilde{C_3}, & x \in I_f,\\
\tilde{C_3}e^{-\sqrt{\mu+\lambda}x}, & x \in I_s^+,\nonumber
\end{cases}
\ \tilde{P_2}(x)=\begin{cases}
\tilde{C}_4e^\frac{\sqrt{\mu+\lambda}x}{2}+\frac{ \tilde{C_3}}{3\sqrt{3}(\mu+\lambda)}e^{\sqrt{\mu+\lambda}x}, & x \in I_s^-,\\
\tilde{C}_4+\frac{ \tilde{C_3}}{3\sqrt{3}(\mu+\lambda)}, & x \in I_f,\\
\tilde{C}_4e^\frac{-\sqrt{\mu+\lambda}x}{2}+\frac{ \tilde{C_3}}{3\sqrt{3}(\mu+\lambda)}e^{-\sqrt{\mu+\lambda}x}, & x \in I_s^+,
\end{cases}\nonumber
\eeq
must satisfy
\beali\label{nu3}
\sqrt{\mu+\lambda}\tilde{C_3}&=\tilde{C_4}+\frac{\tilde{C_3}}{3\sqrt{3}(\mu+\lambda)}+3\nu\tilde{C_1}^2\tilde{C_3},\\
\tilde{C_3}&= 4\left(\frac{\sqrt{\mu+\lambda}\tilde{C_4}}{2}+\frac{\tilde{C_3}}{3\sqrt{3}\sqrt{\mu+\lambda}}\right).
\enali
To leading order, the equation (\ref{nu3}) is equivalent to $(\sqrt{\mu+\lambda})^3-\frac{\sqrt{\mu+\lambda}}{2}+\frac{1}{3\sqrt{3}}=0$.
Its solutions are
\beq
\begin{aligned}
t_{1,2}&=\frac{(1\pm i\sqrt{3})(\frac{1}{2}(-\sqrt{\frac{3}{2}}+\sqrt{3}))^{\frac{1}{3}}}{2 \cdot 3^{\frac{2}{3}}}+\frac{(1 \mp i \sqrt{3})}{2 \cdot 2^{\frac{2}{3}}(3(-\sqrt{\frac{3}{2}}+\sqrt{3}))^{\frac{1}{3}}},\\
t_3&=-(\frac{1}{2}(-\sqrt{\frac{3}{2}}+\sqrt{3}))^{\frac{1}{3}} \cdot \frac{1}{3^{\frac{2}{3}}}-\frac{1}{2^{\frac{2}{3}}(3 (-\sqrt{\frac{3}{2}}+\sqrt{3}))^{\frac{1}{3}}}.\nonumber
\end{aligned}
\eeq
Therefore, when $\hat{\mu}=\frac{1}{12}(4-\sqrt[3]{3+2\sqrt{2}}-\sqrt[3]{3-2\sqrt{2}})$, Hopf bifurcation takes place and $\sigma_{pt}= \sigma_0:=\{\pm i\omega_H\}=\{\pm\frac{\sqrt{3}}{12}(\sqrt[3]{3+2\sqrt{2}}-\sqrt[3]{3-2\sqrt{2}})i\}$. We denote the eigenfunction at the Hopf bifurcation as $(\hat{P}_{1},\hat{P}_{2})^T$.
Then, in order to unfold this bifurcation, we transform system (\ref{nu}) to
\beq
\begin{aligned}
\frac{d\begin{pmatrix}
\tilde{U}_1\\
\tilde{U}_2
\end{pmatrix}}{dt}&=
\begin{pmatrix}
\frac{d^2}{dx^2}-\hat{\mu}+\frac{2}{\varepsilon^2}I(\frac{x}{\varepsilon^2})(3\nu \tilde{\Gamma}_{1,p}^2) & \frac{2}{\varepsilon^2}I(\frac{x}{\varepsilon^2})\\
-\frac{\sqrt{3}}{3}+\frac{2}{\varepsilon^2}I(\frac{x}{\varepsilon^2}) & 4\frac{d^2}{dx^2}-\hat{\mu}
\end{pmatrix}
\begin{pmatrix}
\tilde{U}_1\\
\tilde{U}_2
\end{pmatrix}
+
\begin{pmatrix}
  -\tilde{\mu}\tilde{U}_1+\frac{2}{\varepsilon^2}I(\frac{x}{\varepsilon^2})(\nu\tilde{U}_1^3+3\nu\tilde{U}_1^2\tilde{\Gamma}_{1,p})\\
  -\tilde{\mu}\tilde{U}_2
\end{pmatrix},
\end{aligned}
\eeq
whose
$R_{01}=0,\ R_{11}=-1,\ R_{20}(U,V)=\binom{\frac{6\nu}{\varepsilon^2}I(\frac{x}{\varepsilon^2})\tilde{\Gamma}_{1,p}U_1V_1 }{0},\ R_{30}(U,V,W)=\binom{\frac{2\nu}{\varepsilon^2}I(\frac{x}{\varepsilon^2})U_1V_1W_1 }{0}$.
Therefore, to leading order, we have
\beqn
-L\Psi_{001}=0,\ \ \ \ \ \ \ \ \ \ \ \ \ \
\eeqn
\beqn
\begin{aligned}
(2i\omega_H-L)\Psi_{200}&=\binom{\frac{6\nu}{\varepsilon^2}I(\frac{x}{\varepsilon^2})\tilde{U}_{1,p}\hat{P}_1^2 }{0},\\
-L\Psi_{110}&=\binom{\frac{12\nu}{\varepsilon^2}I(\frac{x}{\varepsilon^2})\tilde{U}_{1,p}\hat{P}_1\overline{\hat{P}}_1} {0}.
\end{aligned}
\eeqn
Owing to the fact that $2i\omega_H$ and 0 do not belong to $\sigma(L)$, we can derive the leading order expressions for $\Psi_{200}$ and $\Psi_{110}$ as
\beq
\Psi_{200,1}(x)=\begin{cases}
\tilde{C_7}e^{\sqrt{\hat{\mu}+2i\omega_H}x}, & x \in I_s^-,\\
\tilde{C_7}, & x \in I_f,\\
\tilde{C_7}e^{-\sqrt{\hat{\mu}+2i\omega_H}x}, & x \in I_s^+,\nonumber
\end{cases}
\ \Psi_{200,2}(x)=\begin{cases}
\tilde{C}_8e^\frac{\sqrt{\hat{\mu}}x}{2}+\frac{ \sqrt{3}\tilde{C_7}e^{\sqrt{\hat{\mu}+2i\omega_H}x}}{3(3\hat{\mu}+8i\omega_H)}, & x \in I_s^-,\\
\tilde{C}_8+\frac{ \sqrt{3}\tilde{C_7}}{3(3\hat{\mu}+8i\omega_H)}, & x \in I_f,\\
\tilde{C}_8e^\frac{-\sqrt{\hat{\mu}}x}{2}+\frac{ \sqrt{3}\tilde{C_7}e^{-\sqrt{\hat{\mu}+2i\omega_H}x}}{3(3\hat{\mu}+8i\omega_H)}, & x \in I_s^+,
\end{cases}\nonumber
\eeq
\beq
\Psi_{110,1}(x)=\begin{cases}
\tilde{C_9}e^{\sqrt{\hat{\mu}}x}, & x \in I_s^-,\\
\tilde{C_9}, & x \in I_f,\\
\tilde{C_9}e^{-\sqrt{\hat{\mu}}x}, & x \in I_s^+,\nonumber
\end{cases}
\ \ \ \Psi_{110,2}(x)=\begin{cases}
\tilde{C}_{10}e^\frac{\sqrt{\hat{\mu}}x}{2}+\frac{ \sqrt{3}\tilde{C_7}}{9\hat{\mu}}e^{\sqrt{\hat{\mu}}x}, & x \in I_s^-,\\
\tilde{C}_{10}+\frac{ \sqrt{3}\tilde{C_7}}{9\hat{\mu}}, & x \in I_f,\\
\tilde{C}_{10}e^\frac{-\sqrt{\hat{\mu}}x}{2}+\frac{ \sqrt{3}\tilde{C_7}}{9\hat{\mu}}e^{-\sqrt{\hat{\mu}}x}, & x \in I_s^+,
\end{cases}\nonumber
\eeq
with $\tilde{C}_7$ and $\tilde{C}_8$ satisfy
\beali
2\sqrt{\hat{\mu}+2i\omega_H}\tilde{C}_7&=6\nu\tilde{C}_1^2\tilde{C}_7+2\left(+\frac{\sqrt{3}\tilde{C}_7}{3(3\hat{\mu}+8i\omega_H)}\right)+6\nu\tilde{C}_1|\tilde{C}_3|^2,\\
\frac{1}{2}\tilde{C}_7&= 2\left(\frac{\sqrt{\hat{\mu}}\tilde{C}_8}{2}+\frac{\sqrt{3}\tilde{C}_7\sqrt{\hat{\mu}+2i\omega_H}}{3(3\hat{\mu}+8i\omega_H)}\right),
\enali
$\tilde{C}_9$ and $\tilde{C}_{10}$ satisfy
\beali
2\sqrt{\hat{\mu}}\tilde{C}_9&=6\nu\tilde{C}_1^2\tilde{C}_9+2\left(\tilde{C}_{10}+\frac{\sqrt{3}\tilde{C}_9}{9\hat{\mu}}\right)+12\nu\tilde{C}_1|\tilde{C}_3|^2,\\
\frac{1}{2}\tilde{C}_9&= 2\left(\frac{\sqrt{\hat{\mu}}\tilde{C}_{10}}{2}+\frac{\sqrt{3}\tilde{C}_9}{9\sqrt{\hat{\mu}}}\right).
\enali
Therefore, we can derive the expression of $b$ as
$
b=\frac{\hat{I}_{2,1}+\hat{I}_{2,2}+\hat{I}_{2,3}}{2\hat{L}},
$
 where
\beq
\begin{aligned}
&\hat{I}_{2,1}=\int_{-\varepsilon}^{\varepsilon}\frac{12\nu}{\varepsilon^2}I(\frac{x}{\varepsilon^2})\tilde{C}_1\tilde{C}_3\tilde{C}_9\overline{\left(\tilde{C}_5-\frac{4\sqrt{3}\tilde{C}_6}{9(\hat{\mu}-i\omega_H)}\right)}dx,\\
&\hat{I}_{2,2}=\int_{-\varepsilon}^{\varepsilon}\frac{12\nu}{\varepsilon^2}I(\frac{x}{\varepsilon^2})\tilde{C}_1\overline{\tilde{C}_3}\tilde{C}_7\overline{\left(\tilde{C}_5-\frac{4\sqrt{3}\tilde{C}_6}{9(\hat{\mu}-i\omega_H)}\right)}dx,\\
&\hat{I}_{2,3}=\int_{-\varepsilon}^{\varepsilon}\frac{12\nu}{\varepsilon^2}I(\frac{x}{\varepsilon^2}){\tilde{C}_3}^2\overline{\tilde{C}_3}\overline{\left(\tilde{C}_5-\frac{4\sqrt{3}\tilde{C}_6}{9(\hat{\mu}-i\omega_H)}\right)}dx,\\
&\hat{L}=2\int_{-\infty}^{-\varepsilon}\left(e^{\sqrt{\hat{\mu}+i\omega_H}x}\overline{\tilde{C}_5e^{\sqrt{\hat{\mu}+i\omega_H}x}-\frac{4\sqrt{3}\tilde{C}_6e^{\frac{\sqrt{\hat{\mu}+i\omega_H}x}{2}}}{9(\hat{\mu}-i\omega_H)}}\right.\\
&\ \ \  \left.+\left(\left(\sqrt{\hat{\mu}+i{\omega_H}}-\frac{1}{3\sqrt{3}(\hat{\mu}+i{\omega_H})}\right)e^\frac{\sqrt{\hat{\mu}+i{\omega_H}}x}{2}+\frac{ 1}{3\sqrt{3}(\hat{\mu}+i{\omega_H})}e^{\sqrt{\hat{\mu}+i{\omega_H}}x}\right)\overline{\tilde{C}_6e^{\frac{\sqrt{\hat{\mu}+i\omega_H}x}{2}}}\right)dx.\nonumber
\end{aligned}
\eeq
When $\mu=0.1<\hat{\mu}$, $\nu=-0.001$, $\varepsilon=0.1$, by numerical calculation, $a=1,\ b\approx-1.49318 + 3.7192 i$. By Theorem \ref{thm2}, system (\ref{nu}) admits a stable periodic solution, whose numerical simulation is shown in Figure \ref{stable osc}. This result is consistent with the numerical simulation in Figure 5$(b)$ \cite{DvHS}. If we change $\nu$ to 0.001, then there are $a=1,\ b\approx1.49318 - 3.7192 i$, which leads to an unstable equilibrium like Figure \ref{unstable equ}. If we set bifurcation parameter $\mu$ to be 0.2, i.e., $\mu>\hat{\mu}$, then we have Figure \ref{unstable osc} when $\nu=0.001$ and Figure \ref{stable equ} when $\nu=-0.001$.
\begin{figure}[htbp]
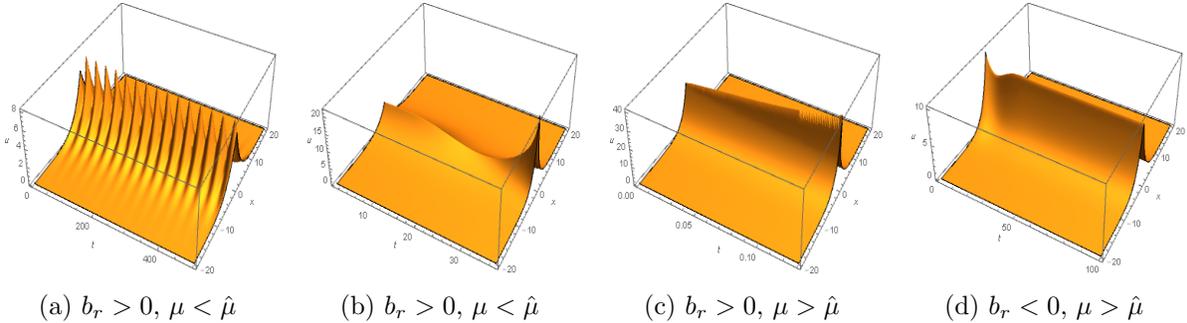

  \centering
  \subcaptionbox{$b_r>0$, $\mu<\hat{\mu}$\label{stable osc}}{
  \includegraphics[width=0.22\textwidth,height=0.22\textwidth]{stableosc.pdf}
  }
  \subcaptionbox{$b_r>0$, $\mu<\hat{\mu}$\label{unstable equ}}{
  \includegraphics[width=0.22\textwidth,height=0.22\textwidth]{unstableequ.pdf}
  }
  \subcaptionbox{$b_r>0$, $\mu>\hat{\mu}$\label{unstable osc}}{
  \includegraphics[width=0.22\textwidth,height=0.22\textwidth]{unstableosc.pdf}
  }
  \subcaptionbox{$b_r<0$, $\mu>\hat{\mu}$\label{stable equ}}{
  \includegraphics[width=0.22\textwidth,height=0.22\textwidth]{stableequ.pdf}
  }
  \caption{numerical simulation of stationary solutions around Hopf bifurcation}
\end{figure}
\section{Discussion}\label{4}
This paper presents a research inspired by numerical simulations in a linear reaction–diffusion system with strong spatially localized impurities \cite{DvHS} and the technical approach of Hopf normal form advocated in \cite{veerman2015breathing}. It demonstrates that introducing a small nonlinearity to the fast variables can stabilize the stationary pulses that would otherwise blow up, and create new stable oscillating pulses when the stationary pulses undergo a Hopf bifurcation, which is exhibited in Remark \ref{rm6} and numerically verified in subsection \ref{35}.  This confirms once again that the Hopf bifurcation could be the birthplace of breathing pulses \cite{firth2002dynamical}. Besides, subsections \ref{33} and \ref{34} present a series of simple nonlinear cases in which the breathing pulses may emerge around the Hopf bifurcation when specific conditions are satisfied.

Furthermore, the center manifold associated with the Hopf bifurcation can be expanded not only up to third order as we already emphasized in Remark \ref{412}. Thus, our next step is to implement the fifth order expansion around the generalized Bautin point. Correspondingly, the non-zero assumption for $b_r$ will be replaced, and the normal form will be changed to
\beqn
\frac{dA}{dt}=i\omega_HA+(a_0\hat{\mu}+a_1\hat{\mu}^2)A+(b_0+b_1\hat{\mu})A|A|^2+cA|A|^4+o(|\hat{\mu}|^3, |A|^5).
\eeqn
It involves more calculations and more inverse problems, while the descriptions for the dynamically modulated pulse amplitudes are available, whose amplitude may be quasiperiodically or even chaotically modulated \cite{VD13}.

Moreover, a detailed unfolding of a Bogdanov–Takens bifurcation \cite{kuznetsov1998elements,argentina1997chaotic} or a Dumortier–Roussarie –Sotomayor bifurcation \cite{dumortier1987generic} of a localized pinned pulse solution is expected to be continued based on our paper and the analysis in \cite{veerman2015breathing}.  In this case, the multiple eigenvalue $\lambda=0$ must be analyzed separately. In turn, these results may serve as a first analytical step towards understanding the pattern formations \cite{coombes2014neural}. As stated earlier, oscillating fronts also play an important role in understanding these phenomena analytically, for example, as shown in \cite{chirilus2019unfolding}. However, whether explicit expressions for pinned fronts (and, more importantly, their eigenfunctions) can be given is an important prerequisite. According to \cite{CLSZ} and \cite{LSZ}, it can be solved easily and the normal form expansion parameters can therefore be evaluated. In particular, the analysis procedure is also valid for multi-pulses in \cite{CLSZ}, which are composed of piecewise periodic solutions with explicit expressions. Especially, the evenness of the eigenfunctions can bring great convenience in calculating the normal form expansions.

\section*{Acknowledgement}
J. Li, Q. Yu and Q. Zhang are supported by NSFC 12171174.

\section*{Data available}
The data that support the findings of this study are available within the article.

\bibliographystyle{plain}

\end{document}